%
%
%
%
\documentclass[11pt,oneside,reqno]{amsart}

\usepackage{graphicx}
\usepackage{spectralsequences}
\usepackage[utf8]{inputenc}
\usepackage{xfrac}   
\usepackage{mathtools}
\usepackage{fancyhdr}
\usepackage{amssymb}
\usepackage{amsthm}
\usepackage{enumitem}
\usepackage{wrapfig}
\usepackage{hyperref}
\usepackage{mathrsfs}
\usepackage[british]{babel}
\usepackage[text={16cm,22cm}]{geometry}
\usepackage{xcolor}
\usepackage{amsmath}
\usepackage{amsfonts}
\usepackage{spectralsequences}
\usepackage{url}
\usepackage{ stmaryrd }
\usepackage{graphics}
\usepackage{amssymb}
\usepackage{listings}
\usepackage[disable]{todonotes}
\usepackage{ifthen}
\usepackage{quiver}
\usepackage[LGR,T1]{fontenc}
\usepackage[autostyle=true]{csquotes}
\usepackage{xparse,xfp}

\ExplSyntaxOn
\NewDocumentCommand{\defineconstant}{mm}
 {
  \cs_new:Npx #1 { \fp_eval:n { #2 } }
 }
\ExplSyntaxOff

\newcommand{\Z}{\mathbb{Z}}
\newcommand{\R}{\mathbb{R}}
\newcommand{\Q}{\mathbb{Q}}

\newcommand{\N}{\mathbb{N}}
\newcommand{\F}{\mathbb{F}}
\newcommand{\Sph}{\mathbb{S}}

\newcommand{\A}{\mathcal{A}}
\newcommand{\lto}{\longrightarrow}

\DeclareMathOperator{\im}{im}
\DeclareMathOperator{\coker}{coker}

\DeclareMathOperator{\id}{id}

\DeclareMathOperator{\D}{Diff}

\DeclareMathOperator{\Sp}{Sp}
\DeclareMathOperator{\Spc}{Spc}

\DeclareMathOperator{\Ext}{Ext}
\DeclareMathOperator{\Sq}{Sq}
\DeclareMathOperator{\Diff}{Diff}
\DeclareMathOperator{\sk}{sk}


\usepackage{tikz-cd}
\tikzcdset{
arrow style=tikz,
diagrams={>={Stealth[scale=0.9]}}
}
\usetikzlibrary{arrows}
\tikzset{commutative diagrams/.cd,arrow style=tikz,diagrams={>=latex'}}
\usepackage{tikz}
\usetikzlibrary{arrows,chains,matrix,positioning,scopes}

\makeatletter
\tikzset{join/.code=\tikzset{after node path={%
\ifx\tikzchainprevious\pgfutil@empty\else(\tikzchainprevious)%
edge[every join]#1(\tikzchaincurrent)\fi}}}
\makeatother

\tikzset{>=stealth',every on chain/.append style={join},
         every join/.style={->}}
         
\makeatletter
\newcommand{\colim@}[2]{%
  \vtop{\m@th\ialign{##\cr
    \hfil$#1\operator@font colim$\hfil\cr
    \noalign{\nointerlineskip\kern1.5\ex@}#2\cr
    \noalign{\nointerlineskip\kern-\ex@}\cr}}%
}
\newcommand{\colim}{%
  \mathop{\mathpalette\colim@{\rightarrowfill@\textstyle}}\nmlimits@
}
\makeatother
\makeatletter
\newcommand{\hocolim@}[2]{%
  \vtop{\m@th\ialign{##\cr
    \hfil$#1\operator@font hocolim$\hfil\cr
    \noalign{\nointerlineskip\kern1.5\ex@}#2\cr
    \noalign{\nointerlineskip\kern-\ex@}\cr}}%
}
\newcommand{\hocolim}{%
  \mathop{\mathpalette\hocolim@{\rightarrowfill@\textstyle}}\nmlimits@
}
\makeatother

\newtheorem{theorem}{Theorem}[section]
\newtheorem{lemma}[theorem]{Lemma}
\newtheorem{corollary}[theorem]{Corollary}
\newtheorem{proposition}[theorem]{Proposition}

\theoremstyle{definition}
\newtheorem{definition}[theorem]{Definition}
\newtheorem{constr}[theorem]{Construction}
\newtheorem{conv}[theorem]{Convention}

\theoremstyle{remark}
\newtheorem{remark}[theorem]{Remark}

\theoremstyle{theorem}

\newlist{legal}{enumerate}{10}
\setlist[legal]{label*=\arabic*.}

\numberwithin{equation}{section}

\makeatletter
\newcommand{\manuallabel}[2]{\def\@currentlabel{#2}\label{#1}}
\makeatother

\makeatletter
\newcommand*{\defeq}{\mathrel{\rlap{%
                     \raisebox{0.3ex}{$\m@th\cdot$}}%
                     \raisebox{-0.3ex}{$\m@th\cdot$}}%
                     =}
\makeatother
    \newcommand\quotient[2]{
        \mathchoice
            {
                \text{\raise1ex\hbox{$#1$}\Big/\lower1ex\hbox{$#2$}}%
            }
            {
                #1\,/\,#2
            }
            {
                #1\,/\,#2
            }
            {
                #1\,/\,#2
            }
    }

\newcommand\restr[2]{{
  \left.\kern-\nulldelimiterspace 
  #1 
  \vphantom{\big|} 
  \right|_{#2} 
  }}

\DeclarePairedDelimiter\floor{\lfloor}{\rfloor}
\tikzset{
  symbol/.style={
    draw=none,
    every to/.append style={
      edge node={node [sloped, allow upside down, auto=false]{$#1$}}}
  }
}
\newsavebox{\pullback}
\sbox\pullback{%
\begin{tikzpicture}%
\draw (0,0) -- (1ex,0ex);%
\draw (1ex,0ex) -- (1ex,1ex);%
\end{tikzpicture}}
\newsavebox{\pushout}
\sbox\pushout{%
\begin{tikzpicture}%
\draw (0,0) -- (0ex,1ex);%
\draw (0ex,1ex) -- (1ex,1ex);%
\end{tikzpicture}}
\tikzset{
  symbol/.style={
    draw=none,
    every to/.append style={
      edge node={node [sloped, allow upside down, auto=false]{$#1$}}}
  }
}

\begin{document}
\title{Splitting the Madsen-Tillmann Spectra $MT\theta_n$}
\author{Jonathan Sejr Pedersen}
\address{Department of Mathematics, University of Toronto, Toronto, Ontario, Canada}
\email{jsejrp@math.toronto.edu}
\author{Andrew Senger}
\address{Department of Mathematics, Harvard University, Cambridge, MA, USA}
\email{senger@math.harvard.edu}
\date{\today}

\begin{abstract}
    We prove that the Madsen-Tillmann spectrum $MT\theta_n$ splits into the sum of spectra $\Sigma^{-2n}MO\langle n+1 \rangle \oplus \Sigma^{\infty-2n}\R P^\infty_{2n}$ after Postnikov trunctation $\tau_{\leq \ell}$ for $\ell = \lfloor \frac{n}{2} \rfloor - 6$. To accomplish this, we prove that the connecting map in a certain fiber sequence is nullhomotopic in this range by an Adams filtration argument. As an application, we compute $H_2(B\Diff(W^{2n}_{g},D^{2n});\Z)$ up to extensions for $n \geq 16$ and $g \geq 7$.
%
\end{abstract}
\maketitle
\setcounter{tocdepth}{1}
\tableofcontents

\section{Introduction}
Let $W^{2n}_g = \#_g (S^n\times S^n)$ denote the $g$-fold connected sum of $S^n\times S^n$ and choose a fixed closed disc $D^{2n}\subset W^{2n}_g$. Let 
$\D^+(W^{2n}_g)$ be the topological group (in the $C^\infty$-topology) of orientation preserving diffeomorphisms, and $\D(W^{2n}_g,D^{2n})$ the subgroup of those diffeomorphisms that fix pointwise an open neighborhood of the disc.

Parameterized Pontryagin-Thom theory \cite[Theorem 4.1]{galatius2019moduli} provides, for $2n\neq 4$, a map
\begin{align*}
    \alpha\colon B\D(W^{2n}_g,D^{2n}) \to \Omega^\infty_0 MT\theta_n
 \end{align*}
which is an isomorphism on $H_*$ for $* \leq \frac{g-3}{2}$. Here $MT\theta_n$ is the Madsen-Tillmann spectrum constructed as the Thom spectrum of the virtual bundle $-\theta_n^*\gamma_{2n}\to \tau_{>n}BO(2n)$ where $\theta_n\colon \tau_{>n} BO(2n)\to BO(2n)$ denotes the $n$-connected cover.

Although the homotopy types of the source and target of $\alpha\colon B\D(W^{2n}_g,D^{2n}) \to \Omega^\infty_0 MT\theta_n$ differ, this is nevertheless a powerful tool in the study of $B\D(W^{2n}_g,D^{2n})$.
For example, as the first integral homology group computes the abelianization of the fundamental group, this relates the abelianization of the mapping class group to the first homology group of $\Omega_0 ^\infty MT\theta_n$, which in turn is isomorphic to $\pi_1 MT\theta_n$.

This was exploited by Galatius and Randal-Williams to compute the abelianizations of these mapping class groups in \cite{galatius2015abelian}.
A key feature of their technique was the stabilization map $s\colon \Sigma^{2n}MT\theta_n \to MO\langle n +1\rangle$ where $MO\langle n+1 \rangle$ is the Thom spectrum of the bundle $\theta_n^*\gamma \to \tau_{>n}BO = \tau_{\geq n+1}BO$.\footnote{Note that our convention for the indexing of $MO \langle n \rangle$ differs from that of Galatius and Randal-Williams. See the conventions section for more details.}
Outside of low-dimensional cases, they show that $\pi_1 MT\theta_n$ splits as $\pi_{2n+1} MO\langle n+1 \rangle \oplus H_1 (\mathrm{Aut}(Q_{W^{2n} _g}))$, where $Q_{W^{2n} _g}$ is a certain quadratic form on the middle homology group of $W^{2n}_g$. Moreover, outside of finitely many cases the groups $\pi_{2n+1} MO \langle n+1 \rangle$ are isomorphic to $\coker(J)_{2n+1}$ by \cite{stolz2007hochzusammenhangende,burklund2019boundaries,burklund2020highdimensional}.

This splitting was also used in work of Krannich--Reinhold that constrains the signatures of bundles of highly-connected manifolds over surfaces \cite{manueljens}. Information about the higher homotopy groups of $MT\theta_n$ could allow an extension of their methods to manifold bundles over a more general base.

\subsection{Results}
Our main theorem strengthens the splitting of $\pi_1 MT\theta_n$ by Galatius and Randal--Williams to a splitting of the spectrum $MT\theta_n$ in a range.
%
%
%

\begin{theorem}\label{thm:mainthm}
    The fiber sequence
    \begin{align*}
F_n \lto \Sigma^{2n}MT\theta_n \xlongrightarrow{s} MO\langle n+1 \rangle,
\end{align*}
becomes split after Postnikov truncation $\tau_{\leq \ell }$ where $\ell = 2n+\floor{\frac{n}{2}}-6$, and thus there is an equivalence of spectra 
    \begin{align*}
       \tau_{\leq \floor{\frac{n}{2}}-6} (MT\theta_n) &\simeq \tau_{\leq \floor{\frac{n}{2}}-6} \Sigma^{-2n} MO \langle n+1 \rangle \oplus \tau_{\leq \floor{\frac{n}{2}}-6} \Sigma^{-2n}F_n \\
       &\simeq \tau_{\leq \floor{\frac{n}{2}}-6} \Sigma^{-2n} MO \langle n+1 \rangle \oplus \tau_{\leq \floor{\frac{n}{2}}-6} \Sigma^{\infty-2n}\R P^{\infty}_{2n}.
    \end{align*}
\end{theorem}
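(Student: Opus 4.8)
The plan is to rephrase the splitting as the nullity of the connecting map of the displayed fibre sequence after truncation, and then to establish that nullity with a filtration argument in the mod $2$ Adams spectral sequence, handling the other primes more cheaply.

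\emph{Step 1: reduction to a connecting map.} The truncated fibre sequence $F_n\lto\Sigma^{2n}MT\theta_n\xrightarrow{s}MO\langle n+1\rangle\xrightarrow{\delta}\Sigma F_n$ splits after $\tau_{\leq\ell}$ precisely when the image $\overline\delta$ of $\delta$ in $[MO\langle n+1\rangle,\tau_{\leq\ell}\Sigma F_n]$ vanishes: since $\tau_{\leq\ell}$ is left adjoint to the inclusion of $\ell$-truncated spectra, $[MO\langle n+1\rangle,\tau_{\leq\ell}\Sigma F_n]=[\tau_{\leq\ell}MO\langle n+1\rangle,\tau_{\leq\ell}\Sigma F_n]$, and a nullhomotopy of $\overline\delta$ produces a section of $\tau_{\leq\ell}s$, hence a splitting $\tau_{\leq\ell}\Sigma^{2n}MT\theta_n\simeq\tau_{\leq\ell}MO\langle n+1\rangle\oplus\tau_{\leq\ell}F_n$; desuspending by $2n$ gives the first displayed equivalence. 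For the second I would identify $\tau_{\leq\ell}\Sigma^{-2n}F_n$ with $\tau_{\leq\ell}\Sigma^{\infty-2n}\R P^\infty_{2n}$ via the Euler-class cofibre sequence comparing the Thom spectra over $\tau_{>n}BO(2n)$ and $\tau_{>n}BO$: in the window $[2n,\ell]$ the ``extra'' cohomology of $\tau_{>n}BO(2n)$ over $\tau_{>n}BO$ consists of one class in each degree, carrying exactly the $\A$-action and attaching data of a stunted projective spectrum. Everything now reduces to showing $\overline\delta=0$.

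\emph{Step 2: finiteness, odd primes, and positive Adams filtration.} The group $[MO\langle n+1\rangle,\tau_{\leq\ell}\Sigma F_n]$ is finite: rationally and at odd primes $\Sigma^{-2n}F_n$ is a sphere (the reduced homology of $\R P^\infty_{2n}$ is $\Z$ in degree $2n$ and all $2$-torsion above), while $H^{\mathrm{odd}}(MO\langle n+1\rangle;\Q)=0$, so the rational group vanishes outright, and at an odd prime $p$ one checks $\overline\delta_{(p)}=0$ layer by layer up the Postnikov tower of $\tau_{\leq\ell}S^{2n+1}$ — the bottom $\Z$ receives nothing because $\pi_{2n+1}MO\langle n+1\rangle$ is finite, and the higher, $\coker(J)$-controlled layers are dispatched using the sparseness of $H^*(MO\langle n+1\rangle;\F_p)$ (polynomial on classes in degrees $\equiv 0\bmod 4$) over the odd Steenrod algebra. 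So it remains to show $\overline\delta$ is $2$-adically trivial. From the cohomology long exact sequence of the cofibre sequence, $\delta^*$ on mod $2$ cohomology is the connecting homomorphism $H^{*-1}(F_n;\F_2)\to H^*(MO\langle n+1\rangle;\F_2)$, which vanishes through degree $\ell$ because $s^*$ is injective there (it is, in that range, the inclusion $H^*(\tau_{>n}BO;\F_2)\hookrightarrow H^*(\tau_{>n}BO(2n);\F_2)$); hence $\overline\delta$ has Adams filtration $\geq 1$.

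\emph{Step 3: killing $\overline\delta$.} I would run the mod $2$ Adams spectral sequence $E_2=\Ext_{\A}\!\big(H^*(\tau_{\leq\ell}\Sigma F_n;\F_2),\,H^*(MO\langle n+1\rangle;\F_2)\big)\Rightarrow[MO\langle n+1\rangle,\tau_{\leq\ell}\Sigma F_n]^{\wedge}_2$ and show that its stem-zero column vanishes in filtrations at least as high as that of $\overline\delta$. The essential input is the $\A$-module structure of $H^*(MO\langle n+1\rangle;\F_2)$: through the range of degrees that matters, $MO\langle n+1\rangle$ splits à la Anderson--Brown--Peterson into suspensions of $ko$ and its higher connective analogues together with free wedge summands, so the computation reduces to $\Ext$ over the finite subalgebra $\A(j)\subset\A$ with $2^{j}\approx n$, which carries a vanishing line of slope $\approx 1/2^{j}$ and intercept $O(2^{j})=O(n)$. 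Finiteness of the abutment (Step 2) means the stem-zero column supports no $h_0$-tower, so together with the vanishing line it is concentrated in a bounded band of filtrations; the narrowness of the cell range of $\tau_{\leq\ell}\Sigma F_n$ (about $n/2$ cells, all in degrees exceeding $2n$) together with the connectivity of $MO\langle n+1\rangle$ then pushes $\overline\delta$ clear of that band, forcing $\overline\delta=0$. The main obstacle is precisely this last estimate: one must make the $\A(j)$-Ext vanishing line and the filtration bound on the connecting class sharp enough that the two windows do not meet, and it is in balancing $2n$ against the top degree of $\A(j)$ and the cell range of the stunted projective spectrum that the exact truncation level $\ell=2n+\lfloor n/2\rfloor-6$ is forced.
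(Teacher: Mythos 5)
Your Step 1 reduction is essentially the paper's (Corollary \ref{cor:getSplit} together with Lemma \ref{lem:MTcofibseq}), but Steps 2 and 3 have genuine gaps at the two places where the real work happens. At $p=2$, your only filtration input is that $\overline\delta$ has Adams filtration $\geq 1$, and your vanishing line rests on an Anderson--Brown--Peterson-type splitting of $MO\langle n+1\rangle$ into $ko$-like summands reducing Ext to $\A(j)$ with $2^j\approx n$; no such splitting is known for the higher covers (already $MO\langle 8\rangle$ has none of this form), and even granting it, a vanishing line of slope $\approx 2^{-j}\approx 1/n$ combined with filtration $\geq 1$ cannot force vanishing at $\ell\approx 2n+\tfrac{n}{2}$ --- knowing the abutment is finite and the Ext is ``concentrated in a band'' does not kill a specific class sitting in filtration $1$. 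The paper needs, and supplies, two quantitative ingredients you do not have: (i) a filtration bound that grows linearly in $n$, obtained by factoring the attaching map through $MO\langle n\rangle\to\cdots\to MO\langle n-k+1\rangle$ and invoking Stong's theorem that each map $MO\langle s+1\rangle\to MO\langle s\rangle$ (for $s\equiv 0,1,2,4\bmod 8$) is zero on mod $2$ cohomology in a range, so each step raises Adams filtration by one, giving filtration roughly $k/2$; and (ii) a slope-$\tfrac12$ vanishing line on the \emph{target}, valid only after replacing $\Sigma^{\infty+1}\R P^\infty_{2n}$ by the fiber $Y$ of the stable cohomotopy Euler class map $g\colon\Sigma^{\infty+1}\R P^\infty_{2n}\to\Sigma^{2n+1}\Sph$ --- the stunted projective space itself carries an $h_0$-tower in stem $2n$ coming from its bottom cell (Lemma \ref{lem:vanishinglinestuntedproj}), so its stem-zero Ext never vanishes and no vanishing-line argument can close without first removing that tower (Lemma \ref{lem:vaninshinglineforY}). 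Balancing (i) against (ii) is exactly what produces $\ell=2n+\lfloor n/2\rfloor-5$ in Proposition \ref{prop:ellnullp2} and hence the $-6$ in the theorem; your proposal has no mechanism that could produce these numbers.

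Your odd-primary step is also not a proof. The mod $p$ cohomology of $MO\langle n+1\rangle$ is not polynomial on classes in degrees $\equiv 0\bmod 4$ (connective covers of $BO$ have abundant odd torsion, e.g.\ already for $BO\langle 8\rangle$), the obstruction groups $H^{j}(MO\langle n+1\rangle;\pi_j(S^{2n+1})_{(p)})$ for $2n+1<j\leq\ell$ have no reason to vanish, and finiteness of $\pi_{2n+1}MO\langle n+1\rangle$ does not rule out torsion classes in $H^{2n+1}(MO\langle n+1\rangle;\Z_{(p)})$. In the paper the odd-primary and rational vanishing is not a generic obstruction computation: the composite of the attaching map with $g$ is null because it consists of two consecutive maps in a fiber sequence (Lemma \ref{lemma:MOkliftY}), so the attaching map lifts to $Y$, and $Y$ is $2$-complete because $g$ is multiplication by $\pm 2$ on bottom cohomology (Lemma \ref{lem:YeulerClass2}, Corollary \ref{cor:Y2complete}). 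The auxiliary spectrum $Y$ thus handles the odd primes and the $h_0$-tower simultaneously; together with the Stong factorization it is the missing idea, not a detail to be filled in.
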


\begin{remark}
We will see in Lemma \ref{lem:MTcofibseq} below, why $F_n$ is equivalent to $\Sigma^\infty \R P_{2n} ^\infty$ in the range of the theorem.
\end{remark}

The proof of Theorem \ref{thm:mainthm} will be the subject of Sections \ref{sec:cofiber} and \ref{sec:splitting2}.
As a corollary, we obtain a calculation of the second integral homology of $\Omega_0^\infty MT\theta_n$ in Section \ref{sec:H2}. Combining this with parameterized Pontryagin-Thom theory yields the following:

\begin{corollary}
For $n\geq 16$ and $g\geq 7$ we have a non-canonical isomorphism
\begin{align*}
   H_2(B\D(W^{2n}_g,D^{2n});\Z)  &\cong H_2(\pi_{2n+1} MO \langle n+1 \rangle\oplus \pi_{2n+1}(\Sigma^{\infty} \R P^\infty _{2n})) \\
   &\oplus \pi_{2n+2} MO \langle n+1 \rangle\oplus \pi_{2n+2}(\Sigma^\infty \R P^\infty _{2n}),
\end{align*}
where $H_2(G)$ for a group $G$ denotes the second group homology with coefficients in the trivial $G$-module $\mathbb{Z}$.
\end{corollary}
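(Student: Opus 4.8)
The plan is to deduce this from Theorem~\ref{thm:mainthm} together with parameterized Pontryagin–Thom theory. First I would invoke the map $\alpha\colon B\D(W^{2n}_g,D^{2n})\to \Omega^\infty_0 MT\theta_n$, which by \cite[Theorem 4.1]{galatius2019moduli} is an isomorphism on $H_*$ for $*\leq\frac{g-3}{2}$. Taking $g\geq 7$ guarantees this holds in degree $2$, so it suffices to compute $H_2(\Omega^\infty_0 MT\theta_n;\Z)$. Since $H_2$ of an infinite loop space only depends on the $2$-truncation of the spectrum, and since $n\geq 16$ implies $\lfloor n/2\rfloor - 6\geq 2$, Theorem~\ref{thm:mainthm} applies to give a splitting
\begin{align*}
\tau_{\leq 2}(MT\theta_n)\simeq \tau_{\leq 2}\Sigma^{-2n}MO\langle n+1\rangle \oplus \tau_{\leq 2}\Sigma^{\infty-2n}\R P^\infty_{2n}.
\end{align*}
Hence $\Omega^\infty_0 MT\theta_n$ agrees, through dimension $2$, with the product of infinite loop spaces $\Omega^\infty_0\Sigma^{-2n}MO\langle n+1\rangle \times \Omega^\infty_0\Sigma^{\infty-2n}\R P^\infty_{2n}$.

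The second step is to compute $H_2$ of a product of (simply connected, since these spectra are connective after the shift — note $MT\theta_n$ and hence the summands are $0$-connected) infinite loop spaces whose homotopy groups are $\pi_1 = \pi_{2n+1}$ and $\pi_2 = \pi_{2n+2}$ of the respective spectra. For a simply connected infinite loop space $\Omega^\infty_0 X$ with $\pi_1 = A$, $\pi_2 = B$, one has $H_1 = A$ and a short exact sequence $0\to B\to H_2 \to H_2(A)\to 0$ coming from the Postnikov tower / the fact that $H_2(K(A,1)) = H_2(A)$ (group homology) and the $k$-invariant to $K(B,2)$ is a \emph{stable} cohomology operation, which for the bottom such operation $A\to B$ is detected by a primary operation; in the infinite-loop setting one can also argue via the Atiyah–Hirzebruch or the James/EHP-type spectral sequence that there is no further contribution. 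Then for a product, $H_2(X\times Y) = H_2(X)\oplus H_2(Y)\oplus (H_1(X)\otimes H_1(Y))$ by Künneth. Assembling, with $A_1 = \pi_{2n+1}MO\langle n+1\rangle$, $A_2 = \pi_{2n+1}\Sigma^\infty\R P^\infty_{2n}$, $B_i = \pi_{2n+2}$ of the respective spectra, one gets $H_2 \cong H_2(A_1\oplus A_2)\oplus B_1\oplus B_2$ (as $H_2(A_1\oplus A_2)$ already absorbs the $A_1\otimes A_2$ cross term via $H_2(G\oplus G') = H_2(G)\oplus H_2(G')\oplus (G\otimes G')$), which is exactly the claimed formula.

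The main obstacle I expect is justifying precisely that the splitting extension $0\to B_i \to H_2 \to H_2(A_i)\to 0$ is the only subtlety and controlling it — equivalently, ruling out any nontrivial interaction between the two Postnikov stages beyond what the trivial-coefficient group homology and the bottom $k$-invariant record. This is why the statement is only asserted "up to extensions": one should phrase the conclusion as a (non-canonical) isomorphism of the \emph{associated graded} of a filtration of $H_2(B\D(W^{2n}_g,D^{2n});\Z)$, or be careful that the extension by $B_i$ splits as abelian groups (which it does, non-canonically, since $H_2$ is just an abelian group). I would handle this by working one infinite loop space at a time via its two-stage Postnikov tower $\Sigma^2 HB_i \to \tau_{\leq 2}X \to HA_i$, taking $\Omega^\infty_0$, and reading off the resulting fibration $K(B_i,2)\to \Omega^\infty_0\tau_{\leq 2}X \to K(A_i,1)$; the Serre spectral sequence in low degrees then yields the short exact sequence above, and Künneth finishes the product case. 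The $n\geq 16$, $g\geq 7$ hypotheses are exactly what make the truncation degree and the stability range large enough for every step to be valid.
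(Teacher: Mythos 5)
Your overall skeleton matches the paper's: use $\alpha$ (with $g\geq 7$ so that degree $2$ is in the stable range), observe that $H_2$ of $\Omega^\infty_0 MT\theta_n$ depends only on $\tau_{\leq 2}$ of the spectrum, apply Theorem \ref{thm:mainthm} (valid since $n\geq 16$ gives $\lfloor n/2\rfloor-6\geq 2$), and then compute $H_2$ of the resulting two-stage infinite loop spaces via K\"unneth. The difference is at the one step you yourself flag as the ``main obstacle,'' and there your argument has a genuine gap. For a connected infinite loop space $Z$ with $\pi_1=A$, $\pi_2=B$, the Serre spectral sequence of $K(B,2)\to \tau_{\leq 2}Z\to K(A,1)$ gives an exact sequence $H_3(K(A,1);\Z)\to B\to H_2(Z)\to H_2(A)\to 0$; the transgression $H_3(K(A,1);\Z)\to B$ is controlled precisely by the first $k$-invariant and need not vanish for an arbitrary space (here $A$ contains torsion, e.g.\ $(\Z/2)^2$ or $\Z/4$, so $H_3(K(A,1);\Z)\neq 0$). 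Your justification --- that the $k$-invariant is a stable operation and hence contributes nothing --- is not a proof: nonzero degree-$2$ stable operations (e.g.\ of $\Sq^2$ type) exist, and one needs an actual theorem to rule out their effect. The paper supplies exactly this input by citing the $n=2$ case of \cite[Corollary 2.5]{arlettaz_1988}, which says the first $k$-invariant of the infinite loop space $\Omega^\infty_0 MT\theta_n$ vanishes; this gives $\tau_{\leq 2}X\simeq K(\pi_1,1)\times K(\pi_2,2)$ and hence the clean direct sum $H_2(X)\cong H_2(\pi_1)\oplus \pi_2$ with no surviving transgression and no extension problem.

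Your two fallback remedies do not repair this. The claim that the extension $0\to B_i\to H_2\to H_2(A_i)\to 0$ splits ``since $H_2$ is just an abelian group'' is false reasoning (abelian extensions such as $0\to\Z/2\to\Z/4\to\Z/2\to 0$ do not split), and retreating to an isomorphism of associated gradeds would prove a strictly weaker statement than the corollary, which asserts a genuine (non-canonical) direct sum decomposition; moreover, without the $k$-invariant input you do not even get the short exact sequence, only an extension of $H_2(A_i)$ by a quotient of $B_i$. Two smaller points: $\Omega^\infty_0 MT\theta_n$ is connected but not simply connected (its $\pi_1$ is exactly the group $A$ you then use), and $MT\theta_n$ itself is not $0$-connected ($\pi_0\cong\Z$); neither slip affects the structure of your argument, but the missing Arlettaz-type vanishing does. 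One genuine simplification in your write-up is worth keeping: since $n\geq 16$, Theorem \ref{thm:mainthm} identifies both $\pi_1$ and $\pi_2$ of $MT\theta_n$ at once, so you do not need to quote the Galatius--Randal-Williams computation of $\pi_1$ separately as the paper does.
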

The various terms appearing on the right hand side are quite accessible in many cases by work of \cite{burklund2019boundaries,burklund2020highdimensional, sengeryiyu2022} and \cite{hoo1965some}.




\subsection{Sketch of the proof}
The main idea of the proof is as follows: first, there is a map $\Sigma^\infty \R P^\infty _{2n} \to F_n$ which is an equivalence upon taking $\tau_{\leq 3n-1}$, so that there is a cofiber sequence of $3n$-truncated spectra
\[\tau_{\leq 3n} MO \langle n + 1 \rangle \to \tau_{\leq 3n} \Sigma^{\infty+1} \R P^\infty _{2n} \to \tau_{\leq 3n} \Sigma^{2n+1} MT\theta_n.\]

To split a truncation of $MT\theta_n$, it will therefore suffice to provide a nullhomotopy of a truncation of the attaching map
\[\tau_{\leq 3n} MO \langle n + 1 \rangle \to \tau_{\leq 3n} \Sigma^{\infty+1} \R P^\infty _{2n}.\]
To do this, we will first use the stable cohomotopy Euler class to factor it through the truncation of an auxiliary spectrum $Y$, which has two advantages over $\Sigma^{\infty+1} \R P^\infty_{2n}$.
Firstly, $Y$ is $2$-complete and secondly, the $2$-primary Adams spectral sequence of $Y$ admits a vanishing line of slope $\frac{1}{2}$.

To prove that a truncation of this attaching map is nullhomotopic, it will therefore suffice to prove that it is of high $2$-primary Adams filtration. To do this, we factor $\tau_{2n+k} MO \langle n + 1 \rangle \to \tau_{\leq 2n+k} \Sigma^{\infty+1} \R P^\infty _{2n}$ through the map $\tau_{2n+k} MO \langle n + 1 \rangle \to \tau_{\leq 2n+k} MO \langle n-k+1\rangle$, which is known by the Thom isomorphism and the computations of Stong \cite{stongBOk} to be of high Adams filtration in a range.

\subsection{Conventions}
Recall the functors (defined for spaces or spectra) $\tau_{\geq n}, \tau_{\leq n}$, which are equipped with natural transformations $\tau_{\geq n} X \to X \to \tau_{\leq n}X$ and satisfy:
\[
\pi_*(\tau_{\geq n}X) = \begin{cases}
    \pi_*(X) & \text{if } *\geq n \\
    0 & \text{if } *<n,
\end{cases} \quad \pi_*(\tau_{\leq n}X) = \begin{cases}
    \pi_*(X) & \text{if } *\leq n \\
    0 & \text{if } *>n.
\end{cases}
\]

We say that a space or spectrum is $n$-\emph{connective} if $X \simeq \tau_{\geq n} X$ and $n$-\emph{connected} if $X \simeq \tau_{> n} X$.
A map of spaces or spectra is $n$-connected (or an $n$-equivalence) if the fiber $F$ is $(n-1)$-connected, i.e. $F \simeq \tau_{>n-1} F$.

Notably, we write $MO\langle n \rangle$ for the Thom spectrum of $\tau_{\geq n} BO \to BO$, following the convention in homotopy theory that $BO\langle n \rangle = \tau_{\geq n} BO$, the $n$th \emph{connective} cover of $BO$. This convention agrees with that used in the papers \cite{burklund2019boundaries,burklund2020highdimensional,sengeryiyu2022}, and contrasts with that of Galatius and Randal-Williams in \cite{galatius2015abelian}, who use $MO\langle n \rangle$ for the Thom spectrum of $\tau_{> n} BO \to BO$.

\subsection{Acknowledgments}
The authors would like to thank Alexander Kupers, the advisor of the first author, for invaluable help and discussions, as well for bringing the authors together to make this collaboration possible.

During the course of this work, Andrew Senger was supported by NSF grant DMS-2103236.

\section{The cofiber sequence for $MT\theta_n$} \label{sec:cofiber}

In this section, we first relate $MT\theta_n$ to $MO\langle n + 1 \rangle$ and $\Sigma^\infty \R P^{\infty} _{2n}$ in a range via a cofiber sequence. We then factor the attaching map of this cofiber sequence through an auxiliary spectrum $Y$ which we will find simpler to work with than $\Sigma^\infty \R P^\infty_{2n}$.
These arguments are inspired by those of \cite[Section 5.1]{galatius2015abelian}.
Finally, we prove that $Y \simeq Y^{\wedge} _2$, reducing this attaching map to one that can be studied via the $2$-primary Adams spectral sequence.


\begin{remark}
    The spectrum $MT\theta_n$ comes from manifold theory, where the convention is that $n$ refers to the connect\emph{ed} cover, while the spectrum $MO\langle n \rangle$ comes from homotopy theory where $n$ refers to the connect\emph{ive} cover. This distinction accounts for the shift in index.
\end{remark}

\begin{definition}
    Letting $\theta_{2n,k} : \tau_{> k} BO(2n) \to BO(2n)$ denote the $k$-connected Whitehead cover and $\gamma_{2n} \to BO(2n)$ denote the tautological vector bundle, we let
    \[MT\theta_{2n,k} \defeq \operatorname{Th}(-\theta_k^*\gamma_{2n} \to \tau_{>k}BO(2n)).\]
    We then have $MT\theta_n = MT\theta_{2n,n}$.
\end{definition}

\begin{lemma}\label{lem:MTcofibseq}
    For each $0\leq k \leq n$ there are cofiber sequences in the category of $(2n+k)$-truncated spectra $\mathrm{Sp}_{\leq 2n+k}$ (resp. in $\mathrm{Sp}_{\leq 2n+k-1}$)
    and natural maps
    \begin{equation*}
        \begin{tikzcd}
          \tau_{\leq 2n+k}MO\langle k+1 \rangle \rar{}\dar{} & \tau_{\leq 2n+k}\Sigma^{\infty+1}\R P^\infty_{2n} \arrow[d] \rar{} & \tau_{\leq 2n+k}\Sigma^{2n+1}MT\theta_{2n,k} \dar{} \\
          \tau_{\leq 2n+k-1}MO\langle k \rangle \rar{} & \tau_{\leq 2n+k-1}\Sigma^{\infty+1}\R P^\infty_{2n} \rar{} &  \tau_{\leq 2n+k-1} \Sigma^{2n+1}MT\theta_{2n,k-1}
        \end{tikzcd}
    \end{equation*}
    where the middle vertical map is the truncation.
\end{lemma}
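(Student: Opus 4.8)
The plan is to start from the tautological (co)fiber sequences coming from Whitehead covers of $BO(2n)$ and $BO$, apply Pontryagin--Thom/Thom-spectrum functoriality, and then identify the relevant cofibers in the stated ranges.

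First I would recall that for each $k$ there is a fiber sequence of spaces
\[S^{2n+k}\text{-ish fiber} \to \tau_{>k}BO(2n) \to \tau_{>k-1}BO(2n),\]
more precisely the fiber of $\tau_{>k}BO(2n)\to\tau_{>k-1}BO(2n)$ is an Eilenberg--MacLane space $K(\pi_k O(2n),k)$ sitting in the appropriate Postnikov tower; and similarly for $BO$ in place of $BO(2n)$. The map $BO(2n)\to BO$ is $(2n)$-connected, hence $\tau_{>k}BO(2n)\to\tau_{>k}BO$ is $(2n)$-connected for all $k\le 2n$. Taking Thom spectra of the (restrictions of the) virtual bundle $-\gamma_{2n}$, resp. $\gamma$, the cofiber of $MT\theta_{2n,k}\to MT\theta_{2n,k-1}$ is, by the Thom isomorphism applied fiberwise over the Postnikov fiber, a shift of a Thom spectrum over $K(\pi_k O,k)$; in the relevant range this Postnikov fiber contributes exactly a suspension of $\Sigma^\infty\R P^\infty_{2n}$ — this is precisely the kind of identification carried out in \cite[Section 5.1]{galatius2015abelian}, and I would cite and adapt that computation rather than redo it. Dually, the cofiber of $MO\langle k+1\rangle\to MO\langle k\rangle$ is a shift of $H\pi_k O\wedge(\text{something})$, but the key input is that in degrees $\le 2n+k$ all three columns are controlled by these Postnikov fibers, and the horizontal cofiber sequences in the statement are obtained by splicing the Pontryagin--Thom cofiber sequence $\Sigma^\infty\R P^\infty_{2n}\to \Sigma^{2n}MT\theta_{2n,k}\to (\text{Thom spectrum of }\tau_{\ge k+1}BO)$ against its analogue one Whitehead stage down.

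Concretely, the steps are: (1) produce the top horizontal cofiber sequence in $\mathrm{Sp}_{\le 2n+k}$, using that the map $\tau_{>k}BO(2n)\to\tau_{\ge k+1}BO$ has $(2n)$-connected fiber and hence after Thomification and a shift by $2n$ its cofiber agrees with $\Sigma^{\infty+1}\R P^\infty_{2n}$ through degree $2n+k$ — here I would invoke Lemma-level inputs already used in the excerpt (the $4n$-connected map $\R P^\infty_{2n}\to SO/SO(2n)$ and the identification of fibers of Whitehead covers); (2) produce the bottom horizontal cofiber sequence identically with $k$ replaced by $k-1$, living in $\mathrm{Sp}_{\le 2n+k-1}$; (3) construct the three vertical maps as the natural truncation/Whitehead-cover maps $MO\langle k+1\rangle\to MO\langle k\rangle$, $\tau_{\le 2n+k}(-)\to\tau_{\le 2n+k-1}(-)$ on $\Sigma^{\infty+1}\R P^\infty_{2n}$, and $MT\theta_{2n,k}\to MT\theta_{2n,k-1}$; (4) check the two squares commute, which is formal once everything is exhibited as Thom spectra of a compatible tower of spaces over $BO(2n)$, since all maps are induced by maps of the underlying spaces of the Whitehead tower.

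The main obstacle is step (1): carefully pinning down \emph{why} the Thom spectrum of the Postnikov fiber contributes exactly $\Sigma^\infty\R P^\infty_{2n}$ (and with the correct suspension coordinate), including keeping track of the stable-normal-bundle twist that turns the naive Thom space over $K(\Z/2,2n)$-type data into a stunted projective space. I expect this to require a small but genuine argument combining the James-type splitting / the $4n$-connected comparison $\R P^\infty_{2n}\simeq SO/SO(2n)$ with the Thom isomorphism; everything else — functoriality of Thom spectra, commutativity of the squares, and the truncation bookkeeping — is routine. I would isolate this identification as the heart of the proof and defer the remaining diagram-chase to a sentence.
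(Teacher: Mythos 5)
Your overall skeleton is the paper's: identify the fiber of $\tau_{>k}BO(2n)\to\tau_{>k}BO$ with $SO/SO(2n)$, which agrees with $\R P^\infty_{2n}$ far beyond the relevant range by Whitehead's theorem, Thomify to obtain $\Sigma^\infty SO/SO(2n)\to\Sigma^{2n}MT\theta_{2n,k}\to MO\langle k+1\rangle$ in a range, truncate and rotate, and get the vertical maps from naturality in $k$. However, there is a genuine gap exactly at the word ``hence'' in your step (1): knowing only that the fiber is $(2n-1)$-connected does not give that the cofiber of $\Sigma^{2n}MT\theta_{2n,k}\to MO\langle k+1\rangle$ agrees with $\Sigma^{\infty+1}\R P^\infty_{2n}$ through degree $2n+k$ --- a range that grows with $k$ cannot come from the connectivity of the fiber alone. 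The missing input is the $k$-connectivity of the base $\tau_{>k}BO$ combined with a Blakers--Massey (or Serre spectral sequence) comparison: the paper notes that the homotopy pullback square with corners $SO/SO(2n)$, $\tau_{>k}BO(2n)$, $*$, $\tau_{>k}BO$ is also a $(2n+k+1)$-homotopy pushout, and then uses that the Thom spectrum functor preserves colimits and connectivity. This fiber-versus-cofiber estimate is the heart of the proof and the source of the $k$-dependence of the truncation range; your plan does not supply it.

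Relatedly, you misdiagnose the ``main obstacle.'' There is no twist to untangle over a $K(\Z/2,2n)$: the stunted projective space does not arise from the Postnikov fibers between consecutive Whitehead covers of $BO(2n)$ (those are Eilenberg--MacLane spaces, and the cofiber of $MT\theta_{2n,k}\to MT\theta_{2n,k-1}$ is not a suspension of $\Sigma^\infty\R P^\infty_{2n}$, as your first paragraph suggests); it arises from the stabilization fiber $SO/SO(2n)$ of $BO(2n)\to BO$. The virtual bundle in question restricts trivially to that fiber, so its Thom spectrum is simply a shift of $\Sigma^\infty_+ SO/SO(2n)$ with no twist, and the comparison with $\R P^\infty_{2n}$ is exactly the $4n$-connected map of Whitehead that you already cite --- no James-type splitting or fiberwise Thom isomorphism is needed there. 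In short, the step you isolate as the heart is routine, while the step you treat as routine (the connectivity of the cofiber comparison) is where the actual content of the lemma lies.
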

\begin{proof}
We first construct a map $SO/SO(2n) \to F_n$ and then prove that it is highly connected. For this, we inspect the diagram
\begin{equation}\label{eq:BOsquare}
    \begin{tikzcd}
        SO/SO(2n) \rar{} \dar{} \arrow[dr, phantom, "\usebox\pullback" , very near start, color=black] & \tau_{>k}BO(2n)\rar{} \dar{}  & BO(2n) \dar{} \\
        *\rar{}  & \tau_{>k}BO \rar{} & BO \arrow[loop right, "-1"]
    \end{tikzcd}
\end{equation}
where the left hand square is a homotopy pullback square by the fact that $BO(2n) \to BO$ induces an isomorphism on $\pi_k$ for $k \leq n$. It follows from Blakers-Massey that the left hand square is also a $(2n+k+1)$-homotopy pushout, i.e. that the map from the homotopy cofiber of $SO/SO(2n) \to \tau_{> k} BO(2n)$ to $\tau_{> k} BO$ is an $(2n+k+1)$-equivalence.


Postcomposing this diagram with the map $BO \xrightarrow{-1} BO$ and taking Thom spectra, we obtain a diagram of the form:
\begin{equation}
    \begin{tikzcd}
        \Sigma^\infty _+ SO/SO(2n) \rar{} \dar{} & \Sigma^{2n} MT\theta_{2n,k} \rar{} \dar{}  & \Sigma^{2n} MT\theta_{2n,0}\dar{} \\
        \mathbb{S}\rar{}  & MO\langle k + 1 \rangle \rar{} & MO
    \end{tikzcd}
\end{equation}
Here, we have used the following equivalences of spaces over $BO$ to identify the Thom spectra appearing on the bottom line with $MO \langle k + 1 \rangle$ and $MO$:
\begin{center}
    \begin{tikzcd}
        \tau_{> k} BO \ar[rr, "-1"] \ar[rd,"-1"] & & \tau_{> k} BO & & BO \ar[rr, "-1"] \ar[rd, "-1"] & & BO \\
        & BO \ar[ur, "1"] & & & & BO. \ar[ur, "1"] & & 
    \end{tikzcd}
\end{center}
Since the Thom spectrum functor preserves colimits and connectivity, it follows that the map from the cofiber of $\Sigma^\infty SO/ SO(2n) \to \Sigma^{2n} MT\theta_{2n,k}$ to $MO\langle k + 1 \rangle$ is $(2n+k+1)$-connected.
Applying $\tau_{\leq 2n+k}$, we obtain a cofiber sequence of $(2n+k)$-truncated spectra
\[\tau_{\leq 2n+k} \Sigma^\infty SO/ SO(2n) \to \tau_{\leq 2n+k} \Sigma^{2n} MT\theta_{2n,k} \to \tau_{\leq 2n+k} MO \langle k+1 \rangle.\]

To complete the proof of the lemma, it suffices to rotate twice and apply \cite[Theorem 3]{whitehead45sphere} to see that \[\tau_{\leq 2n+k} \Sigma^\infty SO/ SO(2n) \simeq \tau_{\leq 2n+k} \Sigma^\infty \mathbb{R}P^\infty _{2n}\] and note that everything above was natural as $k$ varies.
\end{proof}

\begin{corollary} \label{cor:getSplit}
        For $\ell \leq 2n+k$, a nullhomotopy of the map $\tau_{\leq \ell} MO\langle k + 1 \rangle \to \tau_{\leq \ell} \Sigma^{\infty + 1} \R P^\infty_{2n}$ gives rise to a splitting
    \[\tau_{\leq \ell -1} \Sigma^{2n} MT\theta_{2n,k} \simeq \tau_{\leq \ell-1}MO \langle k + 1 \rangle \oplus \tau_{\leq \ell-1} \Sigma^\infty \R P^\infty _{2n}\]
\end{corollary}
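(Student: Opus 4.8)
The plan is to obtain the splitting as a formal consequence of the cofiber sequence in $\mathrm{Sp}_{\leq 2n+k}$ produced by Lemma \ref{lem:MTcofibseq}, using the standard principle that a cofiber sequence whose first map becomes nullhomotopic after truncation splits after truncation. Concretely, I would write $f\colon \tau_{\leq 2n+k}MO\langle k+1\rangle \to \tau_{\leq 2n+k}\Sigma^{\infty+1}\R P^\infty_{2n}$ for the attaching map appearing in Lemma \ref{lem:MTcofibseq}, so that by that lemma $\tau_{\leq 2n+k}\operatorname{cofib}(f) \simeq \tau_{\leq 2n+k}\Sigma^{2n+1}MT\theta_{2n,k}$, and note that the hypothesis of the corollary is precisely a nullhomotopy of $\tau_{\leq \ell}f$ (obtained by applying $\tau_{\leq\ell}$, since $\tau_{\leq\ell}\tau_{\leq 2n+k}\simeq\tau_{\leq\ell}$ for $\ell\leq 2n+k$).

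The key formal input I would isolate first is: for any cofiber sequence $A \xrightarrow{g} B \to C$ of spectra with $\tau_{\leq m}g \simeq 0$, one has $\tau_{\leq m}C \simeq \tau_{\leq m}B \oplus \tau_{\leq m}\Sigma A$. This holds because $\tau_{\leq m}$, being left adjoint to the inclusion $\mathrm{Sp}_{\leq m}\hookrightarrow \mathrm{Sp}$, preserves the pushout defining the cofiber, so $\tau_{\leq m}C \simeq \tau_{\leq m}\operatorname{cofib}(\tau_{\leq m}g)$; since $\tau_{\leq m}g$ is null its cofiber is $\tau_{\leq m}B \oplus \Sigma\tau_{\leq m}A$, and reapplying $\tau_{\leq m}$ (using $\tau_{\leq m}\Sigma\tau_{\leq m}A \simeq \tau_{\leq m}\Sigma A$) gives the claim. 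Applying this with $m = \ell \leq 2n+k$ and absorbing the outer truncations $\tau_{\leq \ell}\tau_{\leq 2n+k}(-) \simeq \tau_{\leq \ell}(-)$ throughout (for the last summand one also uses that $\tau_{\leq 2n+k}MO\langle k+1\rangle \to MO\langle k+1\rangle$ is an equivalence on $\pi_{\leq 2n+k}$, hence on $\pi_{\leq\ell}$ after suspension) yields
\[\tau_{\leq \ell}\Sigma^{2n+1}MT\theta_{2n,k} \simeq \tau_{\leq \ell}\Sigma^{\infty+1}\R P^\infty_{2n} \oplus \tau_{\leq \ell}\Sigma MO\langle k+1\rangle.\]

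To finish I would desuspend: $\Sigma^{-1}$ is an equivalence of spectra and $\Sigma^{-1}\tau_{\leq \ell}\Sigma X \simeq \tau_{\leq \ell-1}X$, so applying $\Sigma^{-1}$ to the three terms above gives exactly
\[\tau_{\leq \ell-1}\Sigma^{2n}MT\theta_{2n,k} \simeq \tau_{\leq \ell-1}MO\langle k+1\rangle \oplus \tau_{\leq \ell-1}\Sigma^{\infty}\R P^\infty_{2n},\]
as desired. There is no genuine obstacle here; the only points demanding attention are the bookkeeping of truncation degrees — in particular the unavoidable drop from $\ell$ to $\ell-1$ caused by the desuspension — and checking that every truncation level that appears stays $\leq 2n+k$, the range in which Lemma \ref{lem:MTcofibseq} supplies the cofiber sequence.
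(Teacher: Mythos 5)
Your proposal is correct and is essentially the paper's own argument, just written out in more detail: apply $\tau_{\leq \ell}$ (a left adjoint, hence preserving cofiber sequences) to the cofiber sequence of $(2n+k)$-truncated spectra from Lemma \ref{lem:MTcofibseq}, use that the cofiber of a null map is the direct sum with a (re-truncated) suspension, and desuspend to account for the drop from $\ell$ to $\ell-1$. The truncation bookkeeping, including $\Sigma^{-1}\tau_{\leq \ell}\Sigma X \simeq \tau_{\leq \ell-1}X$, is exactly what the paper's terse proof leaves implicit.
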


\begin{proof}
    The functor $\tau_{\leq \ell}$ takes cofiber sequences of $(2n+k)$-truncated spectra to cofiber sequences of $\ell$-truncated spectra, so this follows from the fact that the cofiber of $A \xrightarrow{0} B$ is $B \oplus \Sigma A$.
\end{proof}

Next, we define the spectrum $Y$ through which we are going to factor the attaching map $\tau_{\leq 2n+k} MO \langle k + 1 \rangle \to \tau_{\leq 2n+k} \R P^\infty _{2n}$. Our construction uses the \emph{stable cohomotopy Euler class} \cite[Section 2]{stolz1989level}.

 \begin{constr}
 Given a rank $n$-vector bundles $E \to B$ and a virtual vector bundle $F \to B$, there is a map of Thom spectra $\operatorname{Th}(F) \to \operatorname{Th}(E\oplus F)$ induced by the zero section of $E$. Letting $F = -E$, this becomes a map $\operatorname{Th}(-E) \to \Sigma^{\infty}_+B$. Composing with the map induced by $B \to \ast$ gives a map $\operatorname{Th}(-E) \to \Sph$ which is the \emph{stable cohomotopy class} $e^s\in [\operatorname{Th}(-E),\Sph^0]$ of $E$. The only property of this construction that we will use is that under the Hurewicz map  $\Sph \to H\Z$ the stable cohomotopy Euler class maps to the (twisted) Euler class of $E$:
\begin{align*}
    e^s \mapsto e(E) \in [\operatorname{Th}(-E),H\Z] \cong H^n(B;\Z^\omega).
\end{align*}

Applying this to $E=\theta^*_n\gamma_{2n}$ gives a map $MT\theta_n \to \Sph$ which under Hurewicz maps to the Euler class of the oriented bundle $-\theta_n^*\gamma_{2n} \to BO(2n)\langle n \rangle$. Let $g\colon \Sigma^{\infty+1}\R P^{\infty}_{2n} \to \Sigma^{2n}\Sph$ be the composite 
\begin{align*}
    \Sigma^{\infty+1}\R P^\infty_{2n} \lto \Sigma^{\infty+1}SO/SO(2n) \lto \Sigma F_n \lto \Sigma^{2n+1}MT\theta_n \xrightarrow{e^S}\Sigma^{2n+1}\Sph.
\end{align*}
Define the spectrum $Y$ by the fiber sequence 
 \begin{align}\label{eq: defofY}
    Y \to \Sigma^{\infty+1} \R P^{\infty}_{2n} \stackrel{g}{\to} \Sigma^{2n+1}\Sph.
 \end{align}
\end{constr}

Now, we show that the map $\tau_{\leq 2n+k}MO\langle k+1 \rangle \to \tau_{\leq 2n+k}\Sigma^{\infty+1}\R P^\infty_{2n}$ for $0\leq k\leq n$ factors through $\tau_{\leq 2n+k} Y$.
\begin{lemma}\label{lemma:MOkliftY}
     For $0\leq k\leq n$, we may find a dashed lift in the diagram:
    \begin{equation*}
        \begin{tikzcd}
            & \tau_{\leq 2n+k}Y \dar{} \\
            \tau_{\leq 2n+k}MO\langle k+1 \rangle \rar{} \arrow[ur, dashed] & \tau_{\leq 2n+k}\Sigma^{\infty+1}\R P^\infty_{2n}.
        \end{tikzcd}
    \end{equation*}
\end{lemma}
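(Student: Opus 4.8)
The plan is to produce the lift by showing that the composite $\tau_{\leq 2n+k} MO\langle k+1\rangle \to \tau_{\leq 2n+k}\Sigma^{\infty+1}\R P^\infty_{2n} \xrightarrow{g} \tau_{\leq 2n+k}\Sigma^{2n+1}\Sph$ is nullhomotopic, since a lift against the fiber sequence \eqref{eq: defofY} exists exactly when this obstruction vanishes. (One should note that $\tau_{\leq 2n+k}$ applied to the fiber sequence \eqref{eq: defofY} is again a fiber sequence, as truncation of spectra preserves fiber sequences of $(2n+k)$-truncated objects on the relevant part.) So the real content is: why is the composite to $\Sigma^{2n+1}\Sph$ zero after truncation?

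First I would unwind the definition of $g$. By construction, $g$ factors through $\Sigma^{2n+1} MT\theta_n \xrightarrow{e^S} \Sigma^{2n+1}\Sph$, so it suffices to understand the composite $\tau_{\leq 2n+k} MO\langle k+1\rangle \to \tau_{\leq 2n+k}\Sigma^{\infty+1}\R P^\infty_{2n} \to \tau_{\leq 2n+k}\Sigma^{2n+1} MT\theta_n$ (where the second map is the one from the cofiber sequence of Lemma~\ref{lem:MTcofibseq}, rotated). But in the cofiber sequence
\[\tau_{\leq 2n+k}MO\langle k+1\rangle \to \tau_{\leq 2n+k}\Sigma^{\infty+1}\R P^\infty_{2n} \to \tau_{\leq 2n+k}\Sigma^{2n+1}MT\theta_{2n,k}\]
the composite of two consecutive maps is canonically nullhomotopic. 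Hence the composite $MO\langle k+1\rangle \to \Sigma^{\infty+1}\R P^\infty_{2n} \to \Sigma^{2n+1}MT\theta_{2n,k}$ is null after truncation. The one subtlety is that $g$ is defined using $MT\theta_n = MT\theta_{2n,n}$ rather than $MT\theta_{2n,k}$; but the natural maps from Lemma~\ref{lem:MTcofibseq} assemble into a commuting ladder, and $MT\theta_{2n,k}\to MT\theta_{2n,n}$ (the cover map, for $k\le n$) is compatible with the identifications, so the composite to $\Sigma^{2n+1}MT\theta_n$ factors through the one to $\Sigma^{2n+1}MT\theta_{2n,k}$, which is already null. Composing further with $e^S$ therefore gives a nullhomotopic map, which is exactly what we need.

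Concretely, the key steps in order are: (1) record that $\tau_{\leq 2n+k}$ carries \eqref{eq: defofY} to a fiber sequence of truncated spectra, so that lifting $MO\langle k+1\rangle$ along $Y\to \Sigma^{\infty+1}\R P^\infty_{2n}$ is obstructed precisely by the composite into $\Sigma^{2n+1}\Sph$; (2) identify, via the naturality square of Lemma~\ref{lem:MTcofibseq}, the composite $MO\langle k+1\rangle \to \Sigma^{\infty+1}\R P^\infty_{2n}\to \Sigma^{2n+1}MT\theta_n$ with (the image under the cover map of) the attaching map's second stage, hence as nullhomotopic after $\tau_{\leq 2n+k}$; (3) postcompose with $e^S$ and conclude $g\circ(\text{attaching map})\simeq 0$ after truncation; (4) choose the lift. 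The main obstacle, and the place to be careful, is step (2): making sure the diagrams of Thom spectra coming from the connected-cover tower are genuinely compatible with how $g$ was defined through $F_n\simeq \Sigma^\infty SO/SO(2n)$ and through $MT\theta_n$ specifically, rather than through $MT\theta_{2n,k}$. Once that bookkeeping is done, the existence of the dashed lift is formal from the fiber sequence.
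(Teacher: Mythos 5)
Your overall strategy is the same as the paper's (show the composite with $g$ is null, then lift against the fiber sequence defining $Y$), but the key step (2) fails as written, for a directional reason. For $k\le n$ the Whitehead covers map $\tau_{>n}BO(2n)\to\tau_{>k}BO(2n)$, so the induced map of Thom spectra goes $MT\theta_n=MT\theta_{2n,n}\to MT\theta_{2n,k}$, not $MT\theta_{2n,k}\to MT\theta_{2n,n}$ as you assert. Consequently the composite $MO\langle k+1\rangle\to\Sigma^{\infty+1}\R P^\infty_{2n}\to\Sigma^{2n+1}MT\theta_n$ does \emph{not} factor through the (null) composite into $\Sigma^{2n+1}MT\theta_{2n,k}$; what the naturality ladder of Lemma \ref{lem:MTcofibseq} gives is the opposite triangle: postcomposing the $MT\theta_n$-valued composite with the cover map yields the $MT\theta_{2n,k}$-valued one. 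Knowing that the latter is null only says that the $MT\theta_n$-valued composite lifts to the fiber of the cover map $\Sigma^{2n+1}MT\theta_n\to\Sigma^{2n+1}MT\theta_{2n,k}$, so it need not be null, and your final step of ``composing further with $e^S$'' then proves nothing. The ingredient you are missing --- and the actual content of the paper's proof --- is the naturality of the stable cohomotopy Euler class: since $\theta_n^*\gamma_{2n}$ is pulled back from $\tau_{>k}BO(2n)$, one has $e^S=e^S_k\circ(\Sigma^{2n+1}MT\theta_n\to\Sigma^{2n+1}MT\theta_{2n,k})$. With this factorization the composite with $g$ rewrites as the attaching map followed by two consecutive maps of the level-$k$ fiber sequence followed by $e^S_k$, and is therefore canonically null. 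Without it, the argument does not go through.

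A secondary issue is your step (1): $\tau_{\le 2n+k}$ does not preserve fibers in the top degree, so $\tau_{\le 2n+k}Y\to\tau_{\le 2n+k}\Sigma^{\infty+1}\R P^\infty_{2n}\to\tau_{\le 2n+k}\Sigma^{2n+1}\Sph$ is not a fiber sequence in spectra; vanishing of the composite only produces a lift to $\operatorname{fib}\bigl(\tau_{\le 2n+k}\Sigma^{\infty+1}\R P^\infty_{2n}\to\tau_{\le 2n+k}\Sigma^{2n+1}\Sph\bigr)$, and the further lift to $\tau_{\le 2n+k}Y$ has an obstruction (in cohomology in degree $2n+k+1$ with coefficients in a degree-$(2n+k)$ kernel) that you do not address. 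The paper sidesteps this by lifting \emph{before} truncating: it lifts the honest, untruncated connecting map $MO\langle k+1\rangle\to\Sigma F_{2n,k}$ to the fiber $X$ of the relevant Euler-class map (this is where the factorization $e^S=e^S_k\circ(\text{cover})$ is used), and only then truncates, identifying $\tau_{\le 2n+k}X\simeq\tau_{\le 2n+k}Y$ via Lemma \ref{lem:MTcofibseq}. You would need a similar maneuver, since the attaching map into $\Sigma^{\infty+1}\R P^\infty_{2n}$ itself only exists after truncation.
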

\begin{proof}
    First, we show that the composition
    \[MO\langle k +1 \rangle \to \Sigma F_n \to \Sigma^{2n+1} MT\theta_n \xrightarrow{e^S} \Sigma^{2n+1} \Sph\]
    is null.
    For this, it suffices to note that the Euler class factors (the bundle on $\tau_{> n} BO(2n)$ is pulled back from $\tau_{>k} BO(2n)$):
    \begin{equation*}
        \begin{tikzcd}
            \Sigma^{2n+1}MT\theta_n \rar{e^S} \dar{} & \Sigma^{2n+1}\Sph \\\Sigma^{2n+1}MT\theta_{2n,k}  \arrow[ur, "e^S_k"'] &
        \end{tikzcd}
    \end{equation*}
    so that we are composing two arrows in a fiber sequence.

    Let $F_{2n,k}$ denote the fiber of $\Sigma^{2n} MT\theta_{2n,k} \to MO \langle k+1 \rangle$, and let $F_n = F_{2n,n}$.
    It follows that the map $MO\langle k + 1 \rangle \to \Sigma F_{2n,k}$ lifts to the fiber of the map $\Sigma F_n \to \Sigma^{2n+1} MT\theta_n \xrightarrow{e^S} \Sigma^{2n+1} \Sph$, which we denote by $X$.
    To conclude, we note that there is a diagram
    \begin{center}
        \begin{tikzcd}
            Y \ar[r] \ar[d] & X \ar[d] \\
            \Sigma^{\infty+1} \R P^\infty_{2n} \ar[r] & \Sigma F_{2n,k}
        \end{tikzcd}
    \end{center}
    and the horizontal arrows become equivalences upon applying $\tau_{\leq 2n+k}$ by Lemma \ref{lem:MTcofibseq}.
\end{proof}

Finally, we record the following result about the behavior of the map $g$ on bottom cohomology groups, which is a consequence of \cite[Lemma 5.3]{galatius2015abelian}. As a consequence, we prove that $\tau_{\leq \ell} Y$ is $2$-complete.

\begin{lemma} \label{lem:YeulerClass2}
    The map $\Z \cong H\Z^{2n+1}(\Sigma^{2n+1} \Sph) \xrightarrow{g^*} H\Z^{2n+1}(\Sigma^{\infty+1} \R P^{\infty}_{2n}) \cong \Z$ is given by multiplication by $\pm 2$.
    As a consequence, the bottom homotopy group of $Y$ is $\pi_{2n} Y \cong \Z/2$.
\end{lemma}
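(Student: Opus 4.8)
The plan is to deduce the cohomological statement from the defining property of the stable cohomotopy Euler class together with \cite[Lemma 5.3]{galatius2015abelian}, and then to read off the computation of $\pi_{2n}Y$ from the long exact sequence of the fiber sequence \eqref{eq: defofY}.

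First I would unwind the definition of $g$. It is the suspension of the composite $\Sigma^\infty \R P^\infty_{2n} \to \Sigma^\infty SO/SO(2n) \to F_n \to \Sigma^{2n} MT\theta_n \xrightarrow{e^S} \Sigma^{2n}\Sph$, and by the proof of Lemma \ref{lem:MTcofibseq} the composite of the first three maps is obtained by applying the Thom spectrum functor to the map of spaces $\R P^\infty_{2n} \to SO/SO(2n) \to \tau_{>n}BO(2n)$ appearing in \eqref{eq:BOsquare} (after the $-1$-twist). Since under the Hurewicz map $\Sph\to H\Z$ the class $e^S\in[MT\theta_n,\Sph]$ goes to the Euler class $e(\theta_n^*\gamma_{2n})\in[MT\theta_n,H\Z]\cong H^{2n}(\tau_{>n}BO(2n);\Z)$ (the Thom isomorphism shifting degree by $2n$), the desuspension of $g$ followed by $\Sigma^{2n}\Sph\to\Sigma^{2n}H\Z$ represents the Euler class $e(\xi)\in H^{2n}(\R P^\infty_{2n};\Z)$, where $\xi$ is the rank-$2n$ bundle on $\R P^\infty_{2n}$ pulled back from $\gamma_{2n}$ (and stably trivial, since its classifying map to $BO$ is null by \eqref{eq:BOsquare}). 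Hence $g^*$ on $H\Z^{2n+1}$ is, after desuspension, multiplication by the integer corresponding to $e(\xi)$ under the identification $H^{2n}(\R P^\infty_{2n};\Z)\cong\Z$; the latter holds because $H_{2n}(\R P^\infty_{2n};\Z)\cong\Z$ is generated by the bottom cell and $H_{2n-1}(\R P^\infty_{2n};\Z)=0$, so restriction along the bottom cell $i\colon S^{2n}=\R P^{2n}/\R P^{2n-1}\hookrightarrow\R P^\infty_{2n}$ is an isomorphism on $H^{2n}(-;\Z)$.

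It then remains to identify $e(i^*\xi)\in H^{2n}(S^{2n};\Z)\cong\Z$, and this is exactly the content of \cite[Lemma 5.3]{galatius2015abelian}: the reflection description of the map $\R P^\infty_{2n}\to SO/SO(2n)$ identifies $i^*\xi$ with the tangent bundle $TS^{2n}$ (consistently stably trivial), so $e(i^*\xi)=\chi(S^{2n})=2$ and $g^*$ is multiplication by $\pm 2$. I expect the only delicate points to be the bookkeeping of suspensions, the normalization of the Thom isomorphism, and the sign introduced by the $-1\colon BO\to BO$ twist---none of which affects the answer beyond its sign---together with the appeal to \cite[Lemma 5.3]{galatius2015abelian} for the identification of $i^*\xi$ with $TS^{2n}$.

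Finally, for $\pi_{2n}Y$: both $\Sigma^{\infty+1}\R P^\infty_{2n}$ and $\Sigma^{2n+1}\Sph$ are $2n$-connected, so the long exact sequence of \eqref{eq: defofY} gives $\pi_k Y=0$ for $k<2n$ and $\pi_{2n}Y\cong\coker\bigl(g_*\colon\pi_{2n+1}\Sigma^{\infty+1}\R P^\infty_{2n}\to\pi_{2n+1}\Sigma^{2n+1}\Sph\bigr)$. Here $\pi_{2n+1}\Sigma^{2n+1}\Sph=\pi_0^s\cong\Z$, while $\pi_{2n+1}\Sigma^{\infty+1}\R P^\infty_{2n}=\pi_{2n}^s(\R P^\infty_{2n})$ is carried isomorphically onto $\tilde H_{2n}(\R P^\infty_{2n};\Z)\cong\Z$ by the Hurewicz theorem (as $\R P^\infty_{2n}$ is $(2n-1)$-connected). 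Under these identifications $g_*$ is the map induced on $\tilde H_{2n+1}(-;\Z)$, which by the universal coefficient theorem---all the groups in play being free in these degrees---is dual to the map on $\tilde H^{2n+1}(-;\Z)$ computed above, hence multiplication by $\pm 2$. Therefore $\pi_{2n}Y\cong\Z/2$, as claimed.
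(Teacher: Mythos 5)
Your proposal is correct and follows essentially the same route as the paper: factor $g^*$ through the Hurewicz image of the stable cohomotopy Euler class so that it becomes the ordinary Euler class pulled back along $\R P^\infty_{2n}\to SO/SO(2n)\to BSO(2n)$, invoke \cite[Lemma 5.3]{galatius2015abelian} to see this is twice a generator, and then read off $\pi_{2n}Y\cong\Z/2$ from the long exact sequence of \eqref{eq: defofY} together with the Hurewicz theorem. The only differences are expository — you re-derive the geometric content of the cited lemma via the bottom-cell restriction to $TS^{2n}$ and spell out the Hurewicz/universal-coefficient identification of $g_*$ on $\pi_{2n+1}$, which the paper leaves implicit.
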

\begin{proof}
    To start, we note that $g^*$ factors as 
    \[H^{2n}(\Sigma^{2n} \Sph; \Z) \xrightarrow{(e^S)^*} H^{0}(MT\theta_n;\Z) \to H^{2n}(\R P^{\infty}_{2n}; \Z)\]
    by definition, and the map $(e^S)^*$ sends a generator to the Euler class in $H^{2n}(\tau_{>n}BSO(2n); \Z) \cong H^0(MT\theta_n;\Z),$
    where we use the Thom isomorphism to identify these cohomology groups.
    By \cite[Lemma 5.3]{galatius2015abelian}, the map $H^{2n}(BSO(2n);\Z) \to H^{2n}(SO/SO(2n);\Z)\cong H^{2n}(\R P^\infty_{2n};\Z)$ pulls back the Euler class to twice a generator.
    Combining this with the above, we deduce that $g^*$ sends a generator to twice a generator as desired.

    For the statement about $\pi_{2n} Y$, we combine the long exact sequence associated to $Y \to \Sigma^{\infty + 1} \R P^\infty_{2n} \xrightarrow{g} \Sigma^{2n+1} \Sph$
    with the Hurewicz theorem.
\end{proof}

\begin{corollary} \label{cor:Y2complete}
    The map $\tau_{\leq \ell} Y \to (\tau_{\leq \ell} Y)^{\wedge}_2$ is an equivalence for all $\ell$.
\end{corollary}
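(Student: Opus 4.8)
The plan is to prove that every homotopy group of $Y$ is a finite abelian $2$-group. Granting this, $\tau_{\leq \ell} Y$ is bounded — it is $(2n-1)$-connected by Lemma \ref{lem:YeulerClass2} and $\ell$-truncated — so it has a finite Postnikov tower with layers $\Sigma^i H\pi_i Y$, each of which is $2$-complete since $\pi_i Y$ is a finite $2$-group. The functor $(-)^\wedge_2$, regarded as $Z \mapsto \lim_m(Z \wedge \Sph/2^m)$, preserves fibre sequences (smashing with the finite spectrum $\Sph/2^m$ does, and so does the sequential limit), so an induction up the Postnikov tower — propagating the equivalence $Z \simeq Z^\wedge_2$ from each layer to the next stage via the long exact sequence of homotopy groups — yields $\tau_{\leq \ell} Y \simeq (\tau_{\leq \ell} Y)^\wedge_2$.

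To analyse $\pi_* Y$ I would proceed in two steps. First, $\pi_* Y$ is degreewise finitely generated, which is immediate from the fibre sequence \eqref{eq: defofY} since $\Sigma^{\infty+1}\R P^\infty_{2n}$ and $\Sigma^{2n+1}\Sph$ have degreewise finitely generated homotopy groups. Second, $Y$ vanishes away from the prime $2$. The reduced homology $\widetilde H_*(\R P^\infty_{2n};\Z)$ is $\Z$ in degree $2n$ and is $2$-torsion in higher degrees, so for each odd prime $p$ the $\Z_{(p)}$-localization $(\Sigma^{\infty+1}\R P^\infty_{2n})_{(p)}$ has homology a single $\Z_{(p)}$ in degree $2n+1$ and is therefore equivalent to $\Sigma^{2n+1}\Sph_{(p)}$; likewise $(\Sigma^{\infty+1}\R P^\infty_{2n})_{\Q} \simeq \Sigma^{2n+1}\Sph_{\Q}$. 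By Lemma \ref{lem:YeulerClass2}, $g$ induces multiplication by $\pm 2$ on the bottom integral homology group, so after localizing at an odd prime, or rationally, it is a homology isomorphism between bounded-below spectra and hence an equivalence. Therefore $Y_{(p)} \simeq 0$ for every odd $p$ and $Y_{\Q} \simeq 0$.

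Combining the two steps, each $\pi_i Y$ is a finitely generated abelian group with $\pi_i Y \otimes \Z_{(p)} = 0$ for all odd $p$ and $\pi_i Y \otimes \Q = 0$, hence is a finite abelian $2$-group, which is exactly what the Postnikov argument requires. I do not anticipate a real obstacle: the only point needing a little care is the first paragraph — the exactness of $2$-completion on bounded-below spectra and the inductive passage up the Postnikov tower — while the rest reduces to the elementary homology of $\R P^\infty_{2n}$ together with the already-established Lemma \ref{lem:YeulerClass2}.
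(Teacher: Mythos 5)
Your proof is correct, but it reaches the conclusion by a different mechanism than the paper. Both arguments rest on the same computational input — Lemma \ref{lem:YeulerClass2} (the map $g$ is $\pm 2$ on the bottom integral (co)homology) together with the fact that the reduced homology of $\R P^\infty_{2n}$ is $\Z$ in degree $2n$ and $2$-torsion above — and both use it to show that $Y$ dies away from the prime $2$. The paper then concludes via the arithmetic fracture square at $p=2$: since $Y$ is of finite type, truncation commutes with $2$-completion, so it suffices to show $Y \to Y^{\wedge}_2$ is an equivalence, which follows because $Y[\frac{1}{2}]$ and $Y^{\wedge}_2[\frac{1}{2}]$ are both contractible (trivial $\Z[\frac{1}{2}]$-homology, bounded below). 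You instead upgrade the "vanishing away from $2$" to the statement that each $\pi_i Y$ is a finite abelian $2$-group (using finite generation from the defining fibre sequence plus $Y_{(p)}\simeq 0$ for odd $p$ and $Y_{\Q}\simeq 0$), and then verify $2$-completeness of $\tau_{\leq \ell} Y$ directly by a finite Postnikov induction, using that $2$-completion $Z \mapsto \lim_m (Z \wedge \Sph/2^m)$ is exact and that Eilenberg--MacLane spectra of finite $2$-groups are $2$-complete. Your route has the small advantage of working with $\tau_{\leq \ell} Y$ directly, so you never need the "finite type implies truncation commutes with completion" step; the cost is the extra bookkeeping of finite generation and the tower induction, which the fracture square packages in one stroke. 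Either way the essential content — killing $Y$ rationally and at odd primes via the factor of $2$ in the Euler class — is identical.
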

\begin{proof}
    Since $Y$ is of finite type, Postnikov truncation commutes with $2$-completion, so it suffices to show that $Y \to Y^{\wedge} _2$ is an equivalence.
    By the fracture square
    \begin{center}
    \begin{tikzcd}
        X \arrow[dr, phantom, "\usebox\pullback" , very near start, color=black] \rar{} \dar{}& X^{\wedge}_2 \dar{} \\
        X[\frac{1}{2}] \rar{} & X^{\wedge}_2 [\frac{1}{2}],
    \end{tikzcd}
    \end{center}
    it suffices to show that the map
    \[Y[\frac{1}{2}] \to Y^{\wedge} _2 [\frac{1}{2}] \]
    is an equivalence.

    In fact, both of these spectra are contractible. To see this, we note that the homology of $\Sigma^{\infty+1} \R P^\infty _{2n}$ and $(\Sigma^{\infty + 1} \R P^\infty_{2n})^{\wedge} _{2}$ with $\Z[\frac{1}{2}]$ coefficients is concentrated in degree $2n+1$, where it is given by $\Z[\frac{1}{2}]$ and $\mathbb{Q}_2$, respectively.
    It therefore follows from Lemma \ref{lem:YeulerClass2} that $Y$ and $Y^{\wedge} _2$ have trivial homology with $\Z[\frac{1}{2}]$ coefficients.
    Since $Y$ and $Y^{\wedge}_2$ are bounded below, it follows that $Y[\frac{1}{2}]$ and $Y^{\wedge} _2 [\frac{1}{2}]$ are contractible, as desired.
\end{proof}

\section{Splitting $MT\theta_n$ at $p=2$} \label{sec:splitting2}

It remains to prove that the attaching map $\tau_{\leq \ell} MT\theta_n \to \tau_{\leq \ell} Y$ is nullhomotopic.
Since we have proven that $\tau_{\leq \ell} Y$ is $2$-complete in Corollary \ref{cor:Y2complete} above, we may employ the $2$-primary Adams spectral sequence to prove this.
Our proof has two steps: first, we prove that the $E_2$-page of the Adams spectral sequence for $Y$ admits a vanishing line.
Second, we prove that the attaching map in question has high Adams filtration.
When combined, these results prove that it must be nullhomotopic.
%

\begin{definition}
    Let $M$ be a module over the mod $2$ Steenrod algebra $\mathcal{A}$. If 
\begin{align*}
\Ext^{s,t}_{\mathcal{A}}(M,\F_2) = 0
\end{align*}
for $ms>(t-s)+c$, we say that $M$ has a \emph{vanishing line} with \emph{slope} $\frac{1}{m}$ and \emph{intercept} $-c$. If $M$ arises as $M = H\F_2^*(X)$ for $X$ a spectrum, we say that $X$ has a vanishing line of slope $\frac{1}{m}$ and intercept $-c$ if this true for $M$.
\end{definition}


The first vanishing line makes its appearance in the following lemma (see also Figure \ref{fig:AdamsRPinf2mod2}):

\begin{lemma}\label{lem:vanishinglinestuntedproj}
    The spectrum $\Sigma^\infty\R P^\infty_{2n}$ has a vanishing line with slope $\frac{1}{2}$ and intercept $2n-3$, except in total degree $t-s=2n$, which consists of a single $h_0$-tower generated in filtration $0$.
\end{lemma}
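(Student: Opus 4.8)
The plan is to deduce the vanishing line from known structural results about the cohomology of stunted projective spaces as modules over the Steenrod algebra $\mathcal A$, together with the standard machine that converts an $\mathcal A$-module filtration with $\A(1)$-free (or $\A(0)$-free, depending on the slope) subquotients into an $\Ext$ vanishing line. First I would recall that $H\F_2^*(\Sigma^\infty \R P^\infty_{2n})$ is, in degrees $\geq 2n$, the quotient $\F_2[x]/(x^{\leq 2n-1})$ with its usual $\Sq^i$-action, and that by the classical James periodicity / $J$-homomorphism picture the stable homotopy type of $\R P^\infty_{2n}$ depends only on $2n \bmod$ a power of $2$; however, for the $\Ext$ statement it is cleaner to work directly with the module. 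The key input is the known additive and $h_0$-structure: the bottom cell in degree $2n$ supports an infinite $h_0$-tower (because $x^{2n}$ is not in the image of any $\Sq^i$ from lower non-existent cells, so $\Ext^{s,2n+s}$ is $\F_2$ for all $s \geq 0$), and this accounts for the stated exceptional line $t-s = 2n$.

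Next I would establish the vanishing line off that column. The standard approach is to choose a Moore-space or $\A(0) = \F_2[\Sq^1]/(\Sq^1)^2$-resolution: one shows that, after quotienting out the bottom $h_0$-tower, the remaining $\mathcal A$-module is, up to a finite piece, a sum of suspensions of $\mathcal A /\!/ \A(0) = \mathcal A \otimes_{\A(0)} \F_2$ — equivalently that $\Sigma^\infty \R P^\infty_{2n}$ modulo its bottom cell is, in a range, built from copies of the integral Eilenberg–MacLane–type module whose $\Ext$ is concentrated on the line of slope $1/2$. Concretely, a vanishing line of slope $1/2$ for an $\mathcal A$-module follows once one knows the module is "$\A(1)$-good" in the appropriate sense; for stunted projective spaces this is classical (it goes back to work surrounding the Segal conjecture and the $J$-homomorphism, and is exactly the type of estimate used by Mahowald, which the introduction already cites). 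I would cite the relevant computation of $\Ext_{\mathcal A}(H^*\R P^\infty_{2n})$ — e.g.\ the $h_0$-localization and the behavior of the Adams filtration of stunted projective spectra — and then read off that $\Ext^{s,t}_{\mathcal A}(H^*\Sigma^\infty\R P^\infty_{2n}, \F_2) = 0$ when $2s > (t-s) + (2n-3)$ outside the column $t - s = 2n$. The intercept $2n-3$ should be pinned down by locating the first nonzero off-tower class: the element corresponding to $\eta$ on the bottom cell lives in $(t-s,s) = (2n+1,1)$, which lies on the line $2s = (t-s) - (2n-1)$, and by shifting one gets $2s = (t-s) + (2n-3)$ as the sharp boundary; I would verify this boundary case against the low-degree chart (Figure \ref{fig:AdamsRPinf2mod2}).

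The main obstacle I anticipate is making the intercept precisely $2n-3$ rather than merely $O(n)$: a soft argument gives a vanishing line of slope $1/2$ easily, but the exact intercept requires either a careful minimal-resolution computation in the first few stems of $\R P^\infty_{2n}$ or an appeal to a sufficiently precise statement in the literature about the Adams $E_2$-page of stunted projective spectra. I would handle this by reducing mod the bottom $h_0$-tower, identifying the next few $\mathcal A$-module generators explicitly (in degrees $2n+1$, $2n+3$, etc., coming from the action of $\Sq^1, \Sq^2$ on $x^{2n}$), and checking that none of them can produce an $\Ext$ class violating the proposed line — this is a finite check in a bounded range because above that range the module is genuinely (a suspension of) $\mathcal A/\!/\A(0)$, whose $\Ext$ sits exactly on the slope-$1/2$ line through the origin of each summand. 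The exceptional-column claim is then immediate: the bottom-cell $h_0$-tower is the unique source of classes with $t - s = 2n$, and it is generated in filtration $0$ as asserted.
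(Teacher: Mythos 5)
Your skeleton is the paper's: split off the bottom cell as a short exact sequence of $\A$-modules $0 \to \F_2[2n] \to H\F_2^*(\Sigma^\infty \R P^\infty_{2n}) \to M \to 0$, handle the $h_0$-tower column via the bottom cell, and get the slope-$\frac12$ line from the structure of $M$. But two steps as you state them would fail. First, your structural claim is wrong: $M$ is not, even eventually, a direct sum of suspensions of $\A /\!/ \A(0)$ as an $\A$-module, and your description of that module's Ext is also off --- $\Ext_{\A}(\A/\!/\A(0),\F_2)\cong \F_2[h_0]$ is a single vertical $h_0$-tower concentrated in one stem, not something "concentrated on the line of slope $1/2$"; if $M$ really were such a sum, its Ext would contain $h_0$-towers in infinitely many stems, contradicting the very vanishing line you are trying to prove. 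What is true, and all that is needed, is the much weaker statement that the restriction of $M$ to $\mathcal{A}_0=\Lambda(\Sq^1)$ is free (because $\Sq^1 x_{2n+2k+1}=x_{2n+2k+2}$ pairs the cells above the bottom one); note also that for a slope-$\frac12$ line the relevant hypothesis is $\mathcal{A}_0$-freeness, not the "$\A(1)$-good" condition you invoke, which governs shallower slopes.

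Second, the intercept $2n-3$ is exactly the point you leave open: you propose a "finite check in a bounded range" whose termination is justified by the false claim above, so as written that step does not go through. The missing ingredient is Adams's 1966 vanishing theorems, which the paper quotes with their explicit intercepts: Theorem 2.1 gives, for any $\A$-module that is free over $\mathcal{A}_0$, a vanishing line of slope $\frac12$ whose intercept is determined by the connectivity of the module (here $M$ begins in degree $2n+1$), and Theorem 1.1 gives the corresponding statement for $\F_2[2n]$ together with its $h_0$-tower in stem $2n$. Feeding both into the long exact sequence in $\Ext$ yields the stated line with intercept $2n-3$ with no further computation. One last small point you skip: to see that the $t-s=2n$ column is exactly the sphere's $h_0$-tower generated in filtration $0$, one must check the boundary map of the long exact sequence landing in that column is zero; the paper does this by observing its source is $h_0$-torsion (it lies above the vanishing line for $M$) while $\Ext(\F_2[2n])$ in that column is $h_0$-torsion-free.
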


\begin{proof}
    
From the cofiber sequence $\R P^{2n-1} \to \R P^\infty \to \R P^{\infty}_{2n}$ and the well-known cohomology of real projective spaces, we deduce 
\begin{align*}
H\F_2^q(\Sigma^\infty\R P^\infty_{2n}) = \begin{cases} \F_2 & \text{if }
        q\geq 2n \\
        0 & \text{else.}
    \end{cases}
\end{align*} 
Furthermore, the Steenrod squares act by $\Sq^i(x_{2n+k}) =\binom{2n+k}{i}x_{2n+k+i}$ where $x_{2n+k}$ is a generator of $H\F_2^{2n+k}(\Sigma^\infty\R P^\infty_{2n})$. In particular, we have $\Sq^1(x_{2n})=0$ and $\Sq^1 (x_{2n+k+1}) = x_{2n+k+2}$ for $k \geq 0$.
We may therefore fit $H\mathbb{F}_2^* (\Sigma^\infty \mathbb{R}P^{\infty} _{2n})$ into an exact sequence
\[0 \to \mathbb{F}_2[2n] \to H\mathbb{F}_2^* (\Sigma^\infty \mathbb{R}P^{\infty} _{2n}) \to M \to 0,\]
where $M$ is a free $\mathcal{A}_0=\Lambda(\Sq^1)$-module.
Since $M$ is free over $\mathcal{A}_0$, it follows from \cite[Theorem 2.1]{adams1966periodicity} that 
\begin{align*}
    \Ext^{s,t}_\A(M,\F_2)=0 \text{ when } t-s-(2n+1)+\varepsilon(s) < 2s, \quad \varepsilon(s) = \begin{cases}
    0 & \text{if } s\equiv 0\pmod{4} \\
    1 & \text{if } s\equiv 1\pmod{4} \\
    2 & \text{if } s\equiv 2,3 \pmod{4}.
\end{cases}
\end{align*}
On the other hand, it follows from \cite[Theorem 1.1]{adams1966periodicity} that $\Ext^{s,t}_\A(\F_2[2n],\F_2)=\Ext^{s,t-2n}_\A(\F_2,\F_2)$ consists of an $h_0$-tower starting in filtration zero for $t-s=2n$, and otherwise satisfies: 
\begin{align*}
    \Ext^{s,t-2n}(\F_2,\F_2) = 0 \text{ when } 0<t-s-2n+\eta(s) < 2s, \quad \eta(s) = \begin{cases}
    1 & \text{if } s\equiv 0,1 \pmod{4} \\
    2 & \text{if } s\equiv 2\pmod{4} \\
    3 & \text{if } s\equiv 3 \pmod{4}.
\end{cases}
\end{align*}
Combining these with the long exact sequence in Ext associated to
\[0 \to \mathbb{F}_2[2n] \to H\mathbb{F}_2^* (\Sigma^\infty \mathbb{R}P^{\infty} _{2n}) \to M \to 0,\]
we obtain the desired statement. Note that the boundary map entering the $t-s=2n$ Ext of $\mathbb{F}_2[2n]$ must be zero because the source is $h_0$-torsion (because of the vanishing line) and the target is $h_0$-torsionfree.
\end{proof}

\begin{figure}[h]
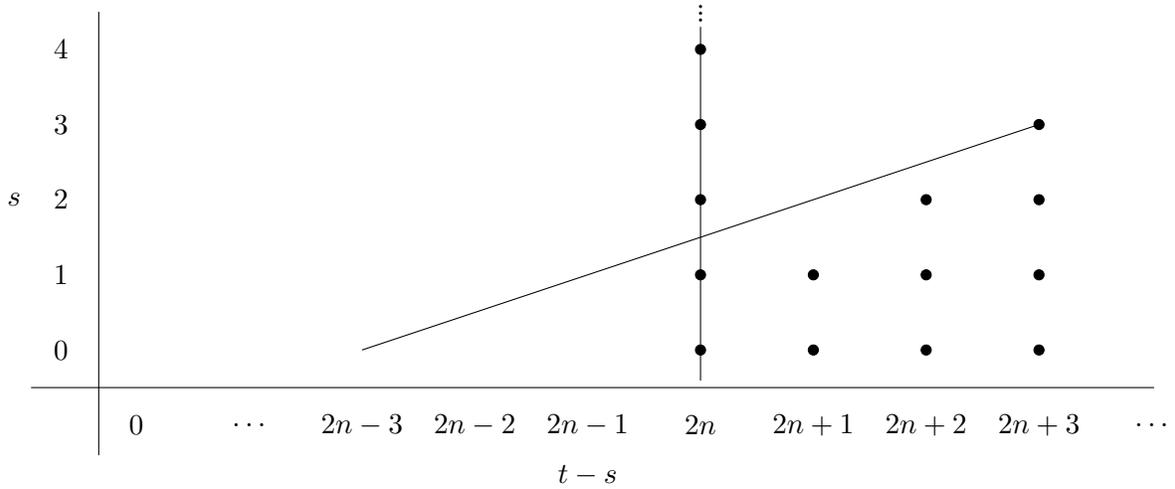

\begin{sseqpage}[ no x ticks, x range = {0}{8}, y range= {0}{4}, xscale = 1.5, classes = fill, x label = {$t-s$},
y label = {\rotatebox[origin=c]{270}{$s$}},  x axis extend end = 4em]
\begin{scope}[ background ]
\node at (\xmin,\ymin - 1) {0};
\node at (\xmin+1,\ymin - 1) {\dots};
\node at (\xmin+2,\ymin - 1) {2n-3};
\node at (\xmin+3,\ymin - 1) {2n-2};
\node at (\xmin+4,\ymin - 1) {2n-1};
\draw (\xmin+2,\ymin) -- (8,\ymin+3);
\node at (\xmax+1,\ymin - 1) {\dots};
\node at (\xmin +5 ,\ymin - 1) {\protect\vphantom{2}2n};
\pgfmathtruncatemacro{\newxmax}{\xmax - 5}
\foreach \n in {1,..., \newxmax}{
\node at (\n+5,\ymin - 1) {2n+\n};
}

\end{scope}
\foreach \n in {0,..., \ymax}{
\class(\xmin +5,\n)
}
\foreach \n in {4,..., \xmax}{
\class(\n+2,\ymin)
}
\foreach \n in {5,..., \xmax}{
\class(\n+1,\ymin+1)
}
\foreach \n in {6,..., \xmax}{
\class(\n+1,\ymin+2)
}

\foreach \n in {7,..., \xmax}{
\class(\n+1,\ymin+3)
}

\class(5,5)
\structline[shorten < = -3em](5,0)(5,5)

\end{sseqpage}
\caption{Illustration of the Adams $E_2$-page for $\Sigma^{\infty}\R P^\infty_{2n}$ with its vanishing line.}\label{fig:AdamsRPinf2mod2}
\centering
\end{figure}

We now show that if we replace $\Sigma^\infty \R P^\infty _{2n}$ by $Y$, we can improve the situation by eliminating the $h_0$-tower.

 \begin{lemma}\label{lem:vaninshinglineforY}
    The spectrum $Y$ has a vanishing line of slope $\frac{1}{2}$ and intercept $2n -3$.
 \end{lemma}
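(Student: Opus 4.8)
The plan is to analyze the cohomology $H\F_2^*(Y)$ as a module over the Steenrod algebra using the cofiber sequence defining $Y$, and then to feed the resulting short exact sequence of $\A$-modules into the long exact sequence in $\Ext$, exactly as in the proof of Lemma \ref{lem:vanishinglinestuntedproj}. Concretely, the fiber sequence $Y \to \Sigma^{\infty+1}\R P^\infty_{2n} \xrightarrow{g} \Sigma^{2n+1}\Sph$ yields a cofiber sequence $\Sigma^{2n}\Sph \to Y \to \Sigma^{\infty+1}\R P^\infty_{2n}$, hence (using that $g^*$ is multiplication by $\pm 2$ on bottom cohomology by Lemma \ref{lem:YeulerClass2}, so in particular $g^*$ vanishes mod $2$) a short exact sequence of $\A$-modules
\[
0 \to H\F_2^*(\Sigma^{\infty+1}\R P^\infty_{2n}) \to H\F_2^*(Y) \to \F_2[2n] \to 0.
\]
First I would identify $H\F_2^*(Y)$ precisely: it agrees with $H\F_2^*(\Sigma^\infty\R P^\infty_{2n})$ shifted up by one, together with one extra class $y_{2n}$ in degree $2n$, and the key point is to compute $\Sq^1 y_{2n}$. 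Since the bottom class $x_{2n+1}$ of $\Sigma^{\infty+1}\R P^\infty_{2n}$ satisfies $\Sq^1 x_{2n+1} = \binom{2n+1}{1}x_{2n+2} = x_{2n+2}$, and since the extension must be nontrivial in $\Ext^{1,2n+1}$ (this is forced by the fact that $Y$ is $2$-complete with $\pi_{2n}Y = \Z/2$ and no free summand appearing — equivalently, $g^*$ being multiplication by $2$ rather than an odd number or $0$), one gets $\Sq^1 y_{2n} = x_{2n+1}$. Therefore $H\F_2^*(Y)$ is, as an $\A_0 = \Lambda(\Sq^1)$-module, the direct sum of a free part (the classes $x_{2n+1}, x_{2n+2}$ paired by $\Sq^1$, then $x_{2n+3},x_{2n+4}$ paired, etc., as before) plus the pair $y_{2n}, x_{2n+1}$ — wait, $x_{2n+1}$ is used; re-indexing, $H\F_2^*(Y)$ is in fact \emph{free} over $\A_0$: the $\Sq^1$-pairing is $(y_{2n}, x_{2n+1}), (x_{2n+2}, x_{2n+3}), (x_{2n+4}, x_{2n+5}), \dots$, since $\Sq^1 x_{2n+k+1} = x_{2n+k+2}$ for $k \geq 0$ and $\Sq^1 y_{2n} = x_{2n+1}$.

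Given this, the cleanest route is: first establish that $M' \defeq H\F_2^*(Y)$ fits in a short exact sequence $0 \to \F_2[2n+1] \to M' \to M \to 0$ with $M$ the free $\A_0$-module appearing in the proof of Lemma \ref{lem:vanishinglinestuntedproj} and the extension nontrivial, OR — and this is what I expect to be slicker — directly observe that $M'$ is itself free over $\A_0$ by the $\Sq^1$-pairing described above. Then \cite[Theorem 2.1]{adams1966periodicity} applies directly to $M'$: for a free $\A_0$-module with bottom class in degree $2n$, one gets $\Ext^{s,t}_\A(M',\F_2) = 0$ when $t - s - 2n + \varepsilon(s) < 2s$ with $\varepsilon(s) \leq 2$, giving a vanishing line of slope $\frac{1}{2}$ and intercept $2n - 3$, with no exceptional $h_0$-tower. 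The whole point of passing from $\Sigma^\infty\R P^\infty_{2n}$ to $Y$ is precisely that attaching the Euler-class cell in degree $2n$ (via $\Sq^1$) kills the $h_0$-tower that was the sole exception in Lemma \ref{lem:vanishinglinestuntedproj}.

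The main obstacle is verifying the crucial claim that $\Sq^1 y_{2n} = x_{2n+1}$, i.e. that the mod $2$ cohomology extension is nontrivial — equivalently that the attaching map of the bottom cell detects the $\Sq^1$. This does not follow formally from $g^*$ being multiplication by $2$ on \emph{integral} bottom cohomology alone; rather, I would argue as follows. The cofiber sequence gives a long exact sequence in $H\F_2^*$, and $g^* = \pm 2 \equiv 0 \pmod 2$ shows $H\F_2^{2n}(Y) = \F_2$ and $H\F_2^{2n+1}(Y) = \F_2$ (the class $x_{2n+1}$ survives). The Bockstein $\beta$ associated to $0 \to \Z \xrightarrow{2} \Z \to \F_2 \to 0$ relates $\Sq^1 = \rho\beta$ to the integral structure: since $\pi_{2n}Y = \Z/2$ (Lemma \ref{lem:YeulerClass2}) the integral Hurewicz/cohomology class in degree $2n$ is $2$-torsion, which forces $\beta y_{2n} \neq 0$ and hence $\Sq^1 y_{2n} = x_{2n+1} \neq 0$. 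Alternatively, and perhaps more transparently, one can note that a nullhomotopy of $\Sq^1 y_{2n}$ would split off $\Sigma^{2n}H\F_2$-wise a summand contradicting $2$-completeness in the relevant range, or invoke that the degree-$2$ map on the bottom cell of $\Sigma^{2n}\Sph$ factors the attaching map through $\eta$-free data so that the Moore-space structure is forced. I would present the Bockstein argument, as it is the most robust. Once $\Sq^1 y_{2n} = x_{2n+1}$ is in hand, the freeness of $H\F_2^*(Y)$ over $\A_0$ and the citation to \cite[Theorem 2.1]{adams1966periodicity} finish the proof immediately.

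\begin{proof}
Consider the cofiber sequence $\Sigma^{2n}\Sph \to Y \to \Sigma^{\infty+1}\R P^\infty_{2n}$ obtained by rotating the fiber sequence \eqref{eq: defofY}. By Lemma \ref{lem:YeulerClass2} the map $g^*$ on integral bottom cohomology is multiplication by $\pm 2$, hence vanishes mod $2$; the associated long exact sequence therefore gives a short exact sequence of $\A$-modules
\[
0 \to H\F_2^*(\Sigma^{\infty+1}\R P^\infty_{2n}) \to H\F_2^*(Y) \to \F_2[2n] \to 0.
\]
As graded $\F_2$-vector spaces, $H\F_2^*(Y)$ has a basis consisting of a class $y_{2n}$ in degree $2n$ together with classes $x_{2n+k}$ for $k \geq 1$ mapping to the generators of $H\F_2^{2n+k}(\Sigma^{\infty+1}\R P^\infty_{2n})$. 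We claim $\Sq^1 y_{2n} = x_{2n+1}$. Indeed, $\Sq^1 = \rho\circ\beta$ where $\beta$ is the integral Bockstein; since $\pi_{2n}Y \cong \Z/2$ by Lemma \ref{lem:YeulerClass2}, the Hurewicz theorem and the universal coefficient theorem show that $H\Z^{2n+1}(Y)$ contains $2$-torsion, so the Bockstein $\beta\colon H\F_2^{2n}(Y) \to H\Z^{2n+1}(Y)$ is nonzero, whence $\Sq^1 y_{2n} \neq 0$; as $x_{2n+1}$ spans $H\F_2^{2n+1}(Y)$ we get $\Sq^1 y_{2n} = x_{2n+1}$.

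Combining this with the action $\Sq^1 x_{2n+k+1} = \binom{2n+k+1}{1}x_{2n+k+2} = x_{2n+k+2}$ for $k \geq 0$ computed in the proof of Lemma \ref{lem:vanishinglinestuntedproj}, we see that $H\F_2^*(Y)$ is a free module over $\A_0 = \Lambda(\Sq^1)$, with $\A_0$-basis in degrees $2n, 2n+2, 2n+4, \dots$ By \cite[Theorem 2.1]{adams1966periodicity}, a free $\A_0$-module with all generators in degrees $\geq 2n$ satisfies
\[
\Ext^{s,t}_\A(H\F_2^*(Y),\F_2) = 0 \quad \text{when } t - s - 2n + \varepsilon(s) < 2s,
\]
where $\varepsilon(s) \leq 2$. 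This is precisely a vanishing line of slope $\tfrac{1}{2}$ and intercept $2n - 3$, and since $H\F_2^*(Y)$ is $\Sq^1$-acyclic there is no exceptional $h_0$-tower. This proves the lemma.
\end{proof}
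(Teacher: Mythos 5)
Your proof is correct, but it takes a genuinely different route from the paper's. The paper uses the same short exact sequence of $\A$-modules but never pins down the module $H\F_2^*(Y)$ itself: it runs the long exact sequence in $\Ext$, observes that the only classes above the desired line are the two $h_0$-towers coming from $H\F_2^*(\Sigma^{2n}\Sph)$ (stem $2n$) and from the bottom class of $H\F_2^*(\Sigma^{\infty+1}\R P^\infty_{2n})$ (stem $2n+1$), and then uses $\pi_{2n}Y\cong\Z/2$ together with convergence of the Adams spectral sequence to force the connecting homomorphism to carry one tower onto $h_0$ times a generator of the other, so that the towers cancel above the line. You instead inject the input $\pi_{2n}Y\cong\Z/2$ (Lemma \ref{lem:YeulerClass2}) one step earlier, at the level of cohomology: it forces $\Sq^1 y_{2n}\neq 0$, hence $H\F_2^*(Y)$ is free over $\A_0$, and Adams's vanishing theorem applies directly, with no exceptional tower and no long exact sequence in $\Ext$. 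Both arguments rest on the same lemma; yours buys the cleaner structural statement that $H\F_2^*(Y)$ is $\A_0$-free (so the vanishing line is immediate), while the paper's avoids having to resolve the $\A$-module extension at all and works purely with $\Ext$ charts.

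Two details should be tightened. First, the formula $\Sq^1 x_{2n+k+1}=\binom{2n+k+1}{1}x_{2n+k+2}$ for all $k\geq 0$ is not correct in the suspended spectrum: Steenrod operations are stable, so the coefficient is the \emph{unsuspended} degree, and in $\Sigma^{\infty+1}\R P^\infty_{2n}$ one has $\Sq^1 x_{2n+k}=(k-1)\,x_{2n+k+1}$, i.e.\ $\Sq^1$ vanishes on the classes in degrees $2n+1,2n+3,\dots$ and is nonzero in degrees $2n+2,2n+4,\dots$ (as stated, your formula would contradict $\Sq^1\Sq^1=0$ and your own claimed basis). Fortunately the $\A_0$-basis you conclude with, generators in degrees $2n,2n+2,2n+4,\dots$ with pairs $(y_{2n},x_{2n+1}),(x_{2n+2},x_{2n+3}),\dots$, is exactly right, so only the intermediate formula needs correcting. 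Second, ``$H\Z^{2n+1}(Y)$ contains $2$-torsion, so the Bockstein is nonzero'' is too quick on its own; the clean justification is that the integral homology long exact sequence of the defining cofiber sequence (with $g_*=\pm 2$) gives $H_{2n}(Y;\Z)\cong\Z/2$ and $H_{2n+1}(Y;\Z)=0$, so $H\Z^{2n}(Y)=0$ and $H\Z^{2n+1}(Y)\cong\Z/2$; hence the integral Bockstein is injective on $H\F_2^{2n}(Y)$ and its image reduces nontrivially mod $2$, giving $\Sq^1 y_{2n}=x_{2n+1}$. With these repairs the argument is complete.
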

 \begin{proof}
 It follows from Lemma \ref{lem:YeulerClass2}
 that $g$ induces the zero map in mod $2$ cohomology and therefore the fiber sequence \eqref{eq: defofY} induces a short exact sequence of $\mathcal{A}$-modules:
 \begin{align}\label{eq:SESofAmods}
     0 \to H\F_2^*(\Sigma^{\infty+1} \R P^{\infty}_{2n}) \to H\F_2 ^*(Y) \to H\F_2^*(\Sigma^{2n}\Sph) \to 0.
 \end{align}
%

Now, by examining the long exact sequence on Ext associated to \eqref{eq:SESofAmods} and again applying \cite[Theorem 1.1]{adams1966periodicity}, we see that the only possible contributions to $\Ext^{s,t}_\A(H\F_2^{*}(Y),\F_2)$ which could lie above the desired vanishing line come from the $h_0$-towers in total degrees $t-s=2n$ and $t-s=2n+1$ of in the Ext of $H\F_2^*(\Sigma^{2n}\Sph)$ and $H\F_2^*(\Sigma^{\infty+1} \R P^{\infty}_{2n})$, respectively.
We claim that the boundary map sends a generator for the $h_0$ tower of $H\F_2^*(\Sigma^{\infty+1} \R P^{\infty}_{2n})$ to $h_0$ times a generator for the tower of $H\F_2^*(\Sigma^{2n}\Sph)$.
Now, Lemma \ref{lem:YeulerClass2} states that $\pi_{2n} Y \cong \Z/2$.
In particular, it follows from the Adams spectral sequence that $h_0$ times a generator for the tower of $H\F_2^*(\Sigma^{2n}\Sph)$ must be zero on the $E_\infty$-page for $Y$.
The only way this can happen is if it is killed by the boundary map as described above.
 \end{proof}

We are going to show that the attaching map $\tau_{\leq 2n +k} MO\langle k+1 \rangle \to \tau_{\leq 2n+k} Y^{\wedge}$ is zero, possibly after further truncation, by an Adams filtration argument.
We start by noting that, for bounded below finite type $p$-complete spectra, there is a particularly convenient notion of skeleton.
%
%

\begin{constr}
    Given a bounded-below finite-type $p$-complete spectrum $E$, we construct an $r$-skeleton $\sk_{\leq r} E \to E$ such that:
    \begin{itemize}
       \item $\sk_{\leq r} E$ is a finite $p$-complete spectrum.
       \item the map $E \to \sk_{\leq r} E$ induces an isomorphism on mod $p$ homology in degrees $\leq r$.
       \item $H_* (\sk_{\leq r} E; \F_p) = 0$ for $* > r$.
    \end{itemize}
    To construct $\sk_{\leq r} E \to E$, we induct on $r$.
    Given
    $\sk_{\leq r-1} E \to E$,
    we let $C$ denote the cofiber and choose a basis for $H_r (C; \F_p),$ which is finite-dimensional by assumption.
    By Hurewicz, we may lift this basis to a map
    \[\bigoplus (\Sph_p^\wedge) ^r \to C.\]
    We then define $\sk_{\leq r} E$ to be the cofiber of
    $\bigoplus (\Sph_p^\wedge) ^{r-1} \to \Sigma^{-1} C \to \sk_{\leq r-1} E$.

    We note that this construction is \emph{weakly functorial} in the sense that a map $f : E \to F$ of bounded below finite type $p$-complete spectra may always be fit into a (non-functorial!) diagram
    \begin{center}
    \begin{tikzcd}
      \sk_{\leq r} E \ar[r] \ar[d,"\sk_{\leq r} f"] & E \ar[d,"f"] \\
       \sk_{\leq r} F \ar[r] & F.
    \end{tikzcd}
    \end{center}
\end{constr}

\begin{conv}
    From now until the end of this section, we implicitly assume that all spectra are $2$-completed.
\end{conv}

\begin{lemma}\label{lem:vanishinglineskeleton}
    If $X$ is a connective spectrum with a vanishing line of slope $\frac{1}{m}$ and intercept $c$, then for any bounded below finite type spectrum $E$ the Adams $E_2$-page of $[\sk_{\leq r}E, X]_*$ has a vanishing line of slope $\frac{1}{m}$ and intercept $c-r$.
\end{lemma}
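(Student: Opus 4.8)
The plan is to induct on $r$ along the cellular tower for $\sk_{\leq r}E$ built in the preceding construction, propagating the vanishing line of $X$ one cell at a time. First I would record the single-cell input. Since $H\F_2^*(\Sph^q)\cong\Sigma^q\F_2$, a degree count gives a natural isomorphism $\Ext^{s,t}_\A\bigl(H\F_2^*(X),\Sigma^q\F_2\bigr)\cong\Ext^{s,t+q}_\A\bigl(H\F_2^*(X),\F_2\bigr)$. The hypothesis that $X$ has a vanishing line of slope $\tfrac1m$ and intercept $c$ means $\Ext^{s,t}_\A(H\F_2^*(X),\F_2)=0$ whenever $ms>(t-s)-c$; substituting, $\Ext^{s,t}_\A\bigl(H\F_2^*(X),H\F_2^*(\Sph^q)\bigr)$ vanishes whenever $ms>(t-s)+q-c$, i.e.\ it has a vanishing line of slope $\tfrac1m$ and intercept $c-q$. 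The same holds for any finite wedge of copies of $\Sph^q$.

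Next I would extract the top cells. By construction there is a cofiber sequence $\sk_{\leq r-1}E\to\sk_{\leq r}E\to W_r$ with $W_r$ a finite wedge of copies of $\Sph^r$ (and $\sk_{\leq r}E\simeq W_r$ once $r$ equals the connectivity of $E$, which is the base case of the induction). Because the structure maps $\sk_{\leq r-1}E\to E$ and $\sk_{\leq r}E\to E$ are compatible and are isomorphisms on $H_*(-;\F_2)$ in degrees $\leq r-1$, resp.\ $\leq r$, the map $H_*(\sk_{\leq r-1}E;\F_2)\to H_*(\sk_{\leq r}E;\F_2)$ is injective: it is an isomorphism in degrees $\leq r-1$ and its source vanishes in degrees $\geq r$. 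Dually $H\F_2^*(\sk_{\leq r}E)\to H\F_2^*(\sk_{\leq r-1}E)$ is surjective, so the cohomology long exact sequence of the cofiber sequence collapses to a short exact sequence of $\A$-modules
\[0\to H\F_2^*(W_r)\to H\F_2^*(\sk_{\leq r}E)\to H\F_2^*(\sk_{\leq r-1}E)\to 0.\]

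Finally I would close the induction. Applying $\Ext_\A\bigl(H\F_2^*(X),-\bigr)$ to this short exact sequence produces a long exact sequence in which $\Ext^{s,t}_\A(H\F_2^*(X),H\F_2^*(\sk_{\leq r}E))$ is flanked by $\Ext^{s,t}_\A(H\F_2^*(X),H\F_2^*(W_r))$, which vanishes for $ms>(t-s)+r-c$ by the first step, and by $\Ext^{s,t}_\A(H\F_2^*(X),H\F_2^*(\sk_{\leq r-1}E))$, which by the inductive hypothesis vanishes for $ms>(t-s)+(r-1)-c$, hence a fortiori for $ms>(t-s)+r-c$. Exactness then forces $\Ext^{s,t}_\A(H\F_2^*(X),H\F_2^*(\sk_{\leq r}E))=0$ for $ms>(t-s)+r-c$; since this group is the Adams $E_2$-page of $[\sk_{\leq r}E,X]_*$, this is exactly a vanishing line of slope $\tfrac1m$ and intercept $c-r$, and the induction closes with base case $\sk_{\leq r}E\simeq W_r$ covered by the first step.

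The step I expect to require the most care is the grading bookkeeping: pinning down the shift in $\Ext^{s,t}_\A(-,\Sigma^q\F_2)\cong\Ext^{s,t+q}_\A(-,\F_2)$ and thereby confirming that a cell in degree $q\leq r$ contributes a vanishing line with intercept $c-q\geq c-r$, so that every cell of $\sk_{\leq r}E$ is controlled by the single line of intercept $c-r$ rather than landing on the wrong side of it. Given that, the collapse of the cohomology long exact sequence to a short exact sequence and the propagation of a vanishing line across a long exact sequence in $\Ext$ are both routine.
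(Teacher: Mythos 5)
Your proof is correct and follows essentially the same route as the paper: reduce $H\F_2^*(\sk_{\leq r}E)$ to shifted copies of $\F_2$ in degrees $\leq r$ and propagate the vanishing line through exact sequences in $\Ext_\A$, the only cosmetic difference being that you induct along the skeletal cell structure (checking that the cohomology long exact sequences collapse to short exact sequences) where the paper simply filters the $\A$-module $H\F_2^*(\sk_{\leq r}E)$ by internal degree. Your grading bookkeeping, $\Ext^{s,t}_\A(H\F_2^*(X),\Sigma^q\F_2)\cong\Ext^{s,t+q}_\A(H\F_2^*(X),\F_2)$ so that a $q$-cell contributes intercept $c-q\geq c-r$, is the correct one and gives exactly the stated intercept $c-r$.
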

\begin{proof}
The mod $2$ cohomology of $sk_{\leq r} E$ lies in degrees $k \leq * \leq r$ for some $k > -\infty$ by definition.
Equipping $H\F_2^{*}(\sk_{\leq r} E)$ with the degree filtration and considering the resulting spectral sequence, we see that it suffices to establish a vanishing line of the desired form for the groups
\[\Ext^{s,t}_{\mathcal{A}}(H^*(X),\F_2[k])=\Ext^{s,t-k}_{\mathcal{A}}(H^*(X),\F_2),\]
when $k \leq r$, which follows from the assumption.
\end{proof}

Applying Lemma \ref{lem:vanishinglineskeleton} to $E=MO\langle k+1\rangle$ and $X=Y$, we obtain the following result:
%

\begin{lemma}\label{lem:filtrationnullhom}
    A map $\sk_{\leq r}MO\langle k+1 \rangle \to Y$ of Adams filtration $\geq d$ is nullhomotopic whenever $r \leq  2(d+k-2)$.
\end{lemma}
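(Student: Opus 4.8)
The plan is to obtain this as a formal consequence of the vanishing line for $Y$ together with its behaviour under passage to skeleta, and then to play the resulting vanishing line against the Adams-filtration hypothesis in the single stem $t-s=0$. First I would invoke Lemma \ref{lem:vanishinglineskeleton} with $X=Y$ and $E=MO\langle k+1\rangle$. This is legitimate: $Y$ is connective (indeed $(2n-1)$-connected), and by Lemma \ref{lem:vaninshinglineforY} it carries a vanishing line of slope $\tfrac12$ and intercept $2n-3$, while $MO\langle k+1\rangle$ is bounded below and of finite type. The conclusion is that the Adams $E_2$-page of $[\sk_{\leq r}MO\langle k+1\rangle,Y]_*$, i.e. $\Ext^{s,t}_{\mathcal{A}}(H\F_2^*(Y),H\F_2^*(\sk_{\leq r}MO\langle k+1\rangle))$, has a vanishing line of slope $\tfrac12$ and intercept $2n-3-r$; unravelling the definition, this group vanishes whenever $2s>(t-s)+r-(2n-3)$.

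Next I would run the standard Adams-filtration argument. A map of spectra $f\colon\sk_{\leq r}MO\langle k+1\rangle\to Y$ is an element of $\pi_0$ of the mapping spectrum, hence is detected, if nonzero, in the stem $t-s=0$ of this spectral sequence. Since $\sk_{\leq r}MO\langle k+1\rangle$ is finite and $Y$ is bounded below, of finite type, and $2$-complete (by our running convention, or Corollary \ref{cor:Y2complete}), the mod $2$ Adams spectral sequence of the mapping spectrum converges completely — and in any case the vanishing line already forces strong convergence. Consequently, if $f$ has Adams filtration $\geq d$ and every $E_2$-group in bidegree $(s,s)$ with $s\geq d$ vanishes, then $f$ is nullhomotopic. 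Substituting $t-s=0$ into the vanishing line, the group in bidegree $(s,s)$ vanishes once $2s>r-(2n-3)$, so all of the groups with $s\geq d$ vanish as soon as $2d>r-(2n-3)$, i.e. $r\leq 2(d+n-2)$. Since $0\leq k\leq n$ throughout (Lemma \ref{lem:MTcofibseq}), the hypothesis $r\leq 2(d+k-2)$ implies $r\leq 2(d+n-2)$, and the lemma follows.

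The argument is essentially formal, so there is no serious obstacle; what requires care is the bookkeeping — translating between the ``intercept'' of the definition and the slope--intercept description of the vanishing line, checking that the filtration argument only ever touches the stem $t-s=0$, and confirming complete convergence of the mapping-spectrum Adams spectral sequence (which holds since the source is a finite spectrum and the target is a bounded-below finite-type $2$-complete spectrum).
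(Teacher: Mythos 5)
Your argument is correct and follows the paper's proof essentially verbatim: both combine Lemma \ref{lem:vaninshinglineforY} with Lemma \ref{lem:vanishinglineskeleton} to obtain a slope-$\frac{1}{2}$ vanishing line for the Adams $E_2$-page of $[\sk_{\leq r}MO\langle k+1\rangle, Y]_*$, restrict to the stem $t-s=0$, and conclude from strong convergence. The only cosmetic difference is that you first establish the sharper bound $r\leq 2(d+n-2)$ (intercept $2n-3-r$) and then deduce the stated $r\leq 2(d+k-2)$ from $k\leq n$, whereas the paper writes the intercept as $2k-3-r$ directly.
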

\begin{proof}
By Lemma \ref{lem:vaninshinglineforY} and Lemma \ref{lem:vanishinglineskeleton}, the $E_2$-page of the Adams spectral sequence for $[\sk_{\leq r} MO \langle k+1 \rangle, Y]_*$ has a vanishing line of slope $\frac{1}{2}$ and intercept $2k-3-r$.
It therefore follows that in total degree $0$, the $E_2$-page is zero in filtration $2d > r+3-2k$, which is equivalent to the condition $r \leq 2(d+k-2)$.
The lemma follows from strong convergence of the Adams spectral sequence.
%
%
%
%
\end{proof}

Finally, we will also find the following lemma useful:

\begin{lemma}\label{lem:nullonskeleton}
    Let $E$ be a bounded below finite type spectrum. Then the composition  $\sk_{\leq \ell}E \to E \to \tau_{\leq \ell}E$ induces an injection
    \begin{align*}
        [\tau_{\leq \ell}E, \tau_{\leq \ell}F] \hookrightarrow [\sk_{\leq \ell}E, \tau_{\leq \ell}F].
    \end{align*}
\end{lemma}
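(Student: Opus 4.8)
The plan is to show that the cofiber $C$ of the skeleton map $\sk_{\leq \ell} E \to E$ has $\tau_{\leq \ell}$-truncation trivial, which makes the claimed map an equivalence (not just an injection), and hence in particular induces an injection on mapping groups. First, I would recall that by construction of the skeleton, the map $\sk_{\leq \ell} E \to E$ induces an isomorphism on mod $p$ homology in degrees $\leq \ell$; equivalently, its cofiber $C$ has $\widetilde{H}_*(C; \F_p) = 0$ for $* \leq \ell$, so $C$ is $\ell$-connected. Since all spectra in sight are bounded below and of finite type, vanishing of mod $p$ homology in a range of degrees (for all $p$, using that the construction can be run $p$-locally or that we are working $2$-completed per the running convention — one must be slightly careful here, but in the $2$-complete setting of this section $\F_p$ means $\F_2$) implies vanishing of the integral homotopy groups in the same range, so $C$ is in fact $\ell$-connective, i.e. $C \simeq \tau_{\geq \ell+1} C$.

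The key step is then that $\tau_{\leq \ell}$ kills $C$: applying $\tau_{\leq \ell}$ to the cofiber sequence $\sk_{\leq \ell} E \to E \to C$ and using that $\tau_{\leq \ell}$ is exact (it takes cofiber sequences of spectra to cofiber sequences of $\ell$-truncated spectra, as used already in Corollary \ref{cor:getSplit}), together with $\tau_{\leq \ell} C \simeq 0$, yields that $\tau_{\leq \ell} \sk_{\leq \ell} E \to \tau_{\leq \ell} E$ is an equivalence. Composing with the equivalence $\tau_{\leq \ell}$ applied to the factorization $\sk_{\leq \ell} E \to E \to \tau_{\leq \ell} E$, we conclude that $\tau_{\leq \ell} \sk_{\leq \ell} E \simeq \tau_{\leq \ell} E$ and that the induced map $[\tau_{\leq \ell} E, \tau_{\leq \ell} F] \to [\sk_{\leq \ell} E, \tau_{\leq \ell} F]$ factors as an equivalence followed by the map induced by $\sk_{\leq \ell} E \to \tau_{\leq \ell} \sk_{\leq \ell} E$. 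Finally, for any spectrum $Z$ and any $\ell$-truncated target $W = \tau_{\leq \ell} F$, the truncation map $Z \to \tau_{\leq \ell} Z$ induces a bijection $[\tau_{\leq \ell} Z, W] \xrightarrow{\sim} [Z, W]$ by the universal property of Postnikov truncation; applying this with $Z = \sk_{\leq \ell} E$ shows the second map is also a bijection, so the composite is injective (indeed bijective), as desired.

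The main obstacle I anticipate is purely bookkeeping around connectivity and the prime $p$: the skeleton construction as stated controls mod $p$ homology, so I must make sure that "$C$ is $\ell$-connected" really does imply "$\pi_i C = 0$ for $i \leq \ell$". Under the running convention that everything is $2$-completed and of finite type, $C$ is a bounded-below $2$-complete spectrum with $\widetilde{H}_*(C;\F_2)$ vanishing through degree $\ell$, and the Hurewicz theorem for $2$-complete spectra (or the mod $2$ Hurewicz theorem plus boundedness) gives $\pi_i C = 0$ for $i \leq \ell$; one should phrase the statement of the lemma so that this hypothesis is available, or simply invoke it as part of the defining properties of $\sk_{\leq \ell}$. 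Everything else — exactness of $\tau_{\leq \ell}$ and the universal property of truncation — is formal.
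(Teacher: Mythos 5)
Your first step is fine: in the $2$-complete setting of this section the cofiber $C$ of $\sk_{\leq\ell}E\to E$ has trivial $\F_2$-homology in degrees $\leq\ell$, hence is $(\ell+1)$-connective, so indeed $\tau_{\leq\ell}C\simeq 0$. The gap is the deduction that $\tau_{\leq\ell}\sk_{\leq\ell}E\to\tau_{\leq\ell}E$ is therefore an equivalence. Since $\tau_{\leq\ell}$ is a left adjoint it does carry the cofiber sequence to a cofiber sequence, but in the category $\mathrm{Sp}_{\leq\ell}$ of $\ell$-truncated spectra, which is not stable; there a map with vanishing cofiber need not be an equivalence (equivalently, $\tau_{\leq\ell}$ does not preserve fiber sequences). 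The failure is concentrated in degree $\ell$: the long exact sequence only shows that $\pi_\ell(\sk_{\leq\ell}E)\to\pi_\ell(E)$ is surjective, with kernel receiving $\pi_{\ell+1}C$. Concretely, take $E=(\Sph/2)^{\wedge}_2$ and $\ell=0$, so that $\sk_{\leq 0}E\simeq\Sph^{\wedge}_2$ maps to $E$ by the unit: then $\tau_{\leq 0}\sk_{\leq 0}E\simeq H\Z_2$ (the $2$-adic integers) while $\tau_{\leq 0}E\simeq H\Z/2$, and these are not equivalent; moreover, choosing $F$ with $\tau_{\leq 0}F\simeq H\Z_2$ (e.g.\ $F=\Sph^{\wedge}_2$), the map of the lemma is $0=[H\Z/2,H\Z_2]\to[\Sph^{\wedge}_2,H\Z_2]\cong\Z_2$, which is injective but far from bijective. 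So the stronger conclusion you are aiming for is false in general, which is exactly why the lemma asserts only an injection.

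The injection itself does follow from what you have already established, by working with the fiber rather than truncating the source. Since $\mathrm{fib}(\sk_{\leq\ell}E\to E)\simeq\Sigma^{-1}C$ is $\ell$-connective and $\mathrm{fib}(E\to\tau_{\leq\ell}E)\simeq\tau_{\geq\ell+1}E$ is $(\ell+1)$-connective, the fiber $F'$ of the composite $\sk_{\leq\ell}E\to\tau_{\leq\ell}E$ is $\ell$-connective. Applying $[-,\tau_{\leq\ell}F]$ to the cofiber sequence $F'\to\sk_{\leq\ell}E\to\tau_{\leq\ell}E$ gives exactness of $[\Sigma F',\tau_{\leq\ell}F]\to[\tau_{\leq\ell}E,\tau_{\leq\ell}F]\to[\sk_{\leq\ell}E,\tau_{\leq\ell}F]$, and the left-hand group vanishes because $\Sigma F'$ is $(\ell+1)$-connective while the target is $\ell$-truncated. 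This is the paper's argument; your final appeal to the universal property of truncation for maps into an $\ell$-truncated target is correct, but it is not needed once the argument is organized this way.
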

\begin{proof}
    The map $\sk_{\leq \ell} E \to E \to \tau_{\leq \ell} E$ is an $\ell$-equivalence, so the fiber is $\ell$-connective. Injectivity now follows from the long exact sequence after applying $[-,\tau_{\leq \ell} F]$ and the usual $t$-structure on spectra implies $[\Sigma \tau_{\geq \ell}X,\tau_{\leq \ell}Y]=0$ for any spectra $X$ and $Y$.
\end{proof}

Now we put everything together, and figure out the exact range in which the attaching map $\tau_{\leq \ell} MO \langle n+1 \rangle \to \tau_{\leq \ell} Y$ is nullhomotopic.

\begin{proposition}\label{prop:ellnullp2} 
    The map $\tau_{\leq\ell}MO\langle n+1\rangle \to \tau_{\leq\ell}Y$ is nullhomotopic for $\ell \leq 2n+\floor{\frac{n}{2}}-5$.
\end{proposition}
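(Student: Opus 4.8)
The plan is to play the Adams filtration bound for the attaching map off against the vanishing line of $Y$ from Lemma~\ref{lem:vaninshinglineforY}, the filtration bound being extracted from the Thom isomorphism together with Stong's computation of $H^*(BO\langle m\rangle;\F_2)$ in \cite{stongBOk}, and then to tune the auxiliary connective cover so that the two bounds meet at $\ell=2n+\floor{\frac n2}-5$.

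First I would reduce to a statement about maps out of a skeleton. By Lemma~\ref{lemma:MOkliftY} with $k=n$ the truncated attaching map lifts to a map $\tau_{\leq\ell}MO\langle n+1\rangle\to\tau_{\leq\ell}Y$, and by Lemma~\ref{lem:nullonskeleton} (applied to $E=MO\langle n+1\rangle$, $F=Y$) it suffices to nullhomotope the composite $\sk_{\leq\ell}MO\langle n+1\rangle\to\tau_{\leq\ell}MO\langle n+1\rangle\to\tau_{\leq\ell}Y$. Since $\sk_{\leq\ell}MO\langle n+1\rangle$ has no cells above dimension $\ell$ and the fiber of $Y\to\tau_{\leq\ell}Y$ is $\ell$-connective, this composite lifts to an honest map $\phi\colon\sk_{\leq\ell}MO\langle n+1\rangle\to Y$, so it is enough to prove $\phi\simeq 0$. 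By Lemma~\ref{lem:filtrationnullhom} with $k=n$ this follows as soon as $\phi$ has Adams filtration $\geq d$ with $\ell\leq 2(d+n-2)$; for $\ell=2n+\floor{\frac n2}-5$ this is precisely the requirement $d\geq\ceil{(\floor{\frac n2}-1)/2}$, a quantity growing linearly in $n$.

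It then remains to bound the Adams filtration of $\phi$ from below. Here I would factor $\phi$ through a lower connective cover: iterating the natural maps of cofiber sequences in Lemma~\ref{lem:MTcofibseq} from index $n$ down to index $m-1$ (legitimate once $\ell\leq 2n+m-1$, i.e. $m\geq\floor{\frac n2}-4$) exhibits $\phi$, after the reductions of the previous paragraph, as a composite $\sk_{\leq\ell}MO\langle n+1\rangle\to\sk_{\leq\ell}MO\langle m\rangle\to Y$, the first arrow being the skeletal version of the canonical map induced by $\tau_{\geq n+1}BO\to\tau_{\geq m}BO$. Since $H^*(Y;\F_2)$ is concentrated in degrees $\geq 2n$, only the behaviour of this factored map on mod~$2$ cohomology in the window $[2n,\ell]$ is relevant; by the Thom isomorphism (and the Wu-formula twist in the $\A$-module structure) that behaviour is governed by restriction along $BO\langle n+1\rangle\to BO\langle m\rangle$, and Stong's description of $H^*(BO\langle m\rangle;\F_2)$ shows that the classes surviving this restriction in the window sit deep enough in the $\A$-module filtration that the composite is forced through many stages of the Adams resolution of $Y$. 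Carrying this through and choosing $m$ — equivalently the length of factorization allowed by $\ell$ — optimally should yield Adams filtration at least $\ceil{(\floor{\frac n2}-1)/2}$, whence $\phi\simeq 0$ by strong convergence of the Adams spectral sequence.

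The main obstacle is this last filtration estimate: making precise, from Stong's computation and the Thom isomorphism, exactly how many cohomologically trivial stages the map $\sk_{\leq\ell}MO\langle n+1\rangle\to\sk_{\leq\ell}MO\langle m\rangle$ breaks into in the degree range $[2n,\ell]$, and checking that the resulting bookkeeping reproduces precisely the bound $\ell\leq 2n+\floor{\frac n2}-5$ (and hence, via Corollary~\ref{cor:getSplit}, the truncation degree appearing in Theorem~\ref{thm:mainthm}). Everything else is formal manipulation of truncations, skeleta, and the vanishing line already in hand.
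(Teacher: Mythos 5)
Your reductions are the same as the paper's: you lift the truncated map to an honest $\phi\colon \sk_{\leq\ell}MO\langle n+1\rangle\to Y$ using Lemma \ref{lem:nullonskeleton} and the connectivity of the fiber of $Y\to\tau_{\leq\ell}Y$, you factor $\phi$ through $\sk_{\leq\ell}MO\langle m\rangle$ via the naturality in Lemma \ref{lem:MTcofibseq} subject to $\ell\leq 2n+m-1$, and you plan to close with Lemma \ref{lem:filtrationnullhom}. However, the step you explicitly defer as ``the main obstacle'' is the actual content of the proposition: it is where the number $2n+\floor{\frac{n}{2}}-5$ comes from, and without it nothing is proved. Concretely, what is missing is (i) the precise input from Stong: $MO\langle s+1\rangle\to MO\langle s\rangle$ is zero on $H\F_2^q$ only for $q<2^{h(s)}$ and only for $s\equiv 0,1,2,4\pmod 8$, where $h(s)$ counts such residues $\leq s$; (ii) the resulting additional constraint $\ell<2^{h(m)}$, which is what guarantees that each qualifying stage $\sk_{\leq\ell}MO\langle s+1\rangle\to\sk_{\leq\ell}MO\langle s\rangle$ is zero on its \emph{entire} mod $2$ cohomology, so that $\phi$ has Adams filtration at least $h(n)-h(m-1)$; and (iii) the optimization of $k=n-m+1$ against the two constraints $\ell\leq 3n-k$ and $\ell\leq 2\bigl(h(n)-h(n-k)\bigr)+2n-4$, which meet at $k\approx\frac{n}{2}+5$ and give $\ell=2n+\floor{\frac{n}{2}}-5$, together with the check that the exponential constraint in (ii) is then automatic for $n\geq 16$ and must be verified by hand for $n\leq 15$ (the paper does this in a table). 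Asserting that optimal choice of $m$ ``should yield'' filtration $\geq\ceil{(\floor{\frac{n}{2}}-1)/2}$ leaves the decisive estimate unproved.

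In addition, your guiding heuristic that only the behaviour of the factored map on mod $2$ cohomology in the window $[2n,\ell]$ is relevant is not how the filtration bound can be obtained. The argument bounds the Adams filtration of $\phi$ from below by exhibiting it as a composite of maps each of which vanishes on \emph{all} of mod $2$ cohomology; the intermediate skeleta $\sk_{\leq\ell}MO\langle s\rangle$ have cohomology starting in degree $0$, so one needs Stong's vanishing through degree $\ell$, not merely in degrees $\geq 2n$, and Stong's theorem is a low-degree statement. This is exactly the origin of the condition $\ell<2^{h(m)}$ and of the separate treatment of small $n$; as written, the window restriction yields no filtration bound and would wrongly suggest that the exponential constraint can be ignored.
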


\begin{proof}
%
Putting together the cofiber sequences from Lemma \ref{lem:MTcofibseq}, we contemplate the commutative diagram  
\begin{equation}\label{eq:nullhtpyonskeleton}
\begin{tikzcd}
	{\sk_{\leq \ell}MO\langle n+1 \rangle} & {MO\langle n+1 \rangle} & {\tau_{\leq 3n-k}MO\langle n+1 \rangle} & {\tau_{\leq 3n-k}Y} \\
	\vdots & \vdots & \vdots & \vdots \\
	{\sk_{\leq \ell}MO\langle n-k+1 \rangle} & {MO\langle n-k+1 \rangle} & {\tau_{\leq 3n-k}MO\langle n-k+1 \rangle} & {\tau_{\leq 3n-k}Y}
	\arrow[from=1-1, to=1-2]
	\arrow[from=1-2, to=1-3]
	\arrow[from=1-3, to=1-4]
	\arrow[from=1-1, to=2-1]
	\arrow[from=2-1, to=3-1]
	\arrow[from=3-1, to=3-2]
	\arrow[from=2-1, to=2-2]
	\arrow[from=2-2, to=2-3]
	\arrow[from=1-3, to=2-3]
	\arrow[from=1-2, to=2-2]
	\arrow[from=3-2, to=3-3]
	\arrow[from=3-3, to=3-4]
	\arrow[equal, from=1-4, to=2-4]
	\arrow[from=2-3, to=2-4]
	\arrow[from=2-2, to=3-2]
	\arrow[from=2-3, to=3-3]
	\arrow[equal, from=2-4, to=3-4]
\end{tikzcd}
\end{equation}
By Lemma \ref{lem:nullonskeleton}, the proposition follows by establishing the upper horizontal composite is nullhomotopic for $\ell \leq 2n+\lfloor\frac{n}{2} \rfloor - 5$.

In the first place, if 
\begin{equation}
    \ell \leq 3n-k \tag{\dag}\label{eq:dag}
\end{equation}
then the map $\sk_{\leq \ell} MO\langle n-k+1 \rangle \to \tau_{\leq 3n-k} Y$ lifts uniquely up to homotopy to a map $\sk_{\leq \ell} MO\langle n-k+1 \rangle \to Y$.
We therefore have a diagram of the form
\begin{center}
    \begin{tikzcd}
        \sk_{\leq \ell} MO\langle n+1 \rangle \ar[d] \ar[r] & Y \\
        \vdots \ar[d] & \\
        \sk_{\leq \ell} MO\langle n -k+1 \rangle \ar[uur]
    \end{tikzcd}
\end{center}
and we wish to show that the map $\sk_{\leq \ell} MO \langle n \rangle \to Y$ is nullhomotopic.

By the Thom isomorphism and \cite{stongBOk} the maps $MO\langle s+1\rangle \to MO\langle s \rangle$ for $s\equiv 0,1,2,4 \mod 8$ are trivial in $H\F_2^q$ for $q<2^{h(s)}$ where $h(s)\defeq \{m\in \N \mid 0<m\leq s, m \equiv 0,1,2,4 \mod 8\}$. Hence if we choose $\ell$ such that
\begin{equation}
\ell <2^{h(n-k+1)}, \tag{$\ast$}\label{eq:ast}
\end{equation}
then the maps $\sk_{\leq \ell}MO\langle s+1 \rangle \to  \sk_{\leq \ell }MO\langle s \rangle$ will be trivial in $H\F_2^*$-cohomology whenever $s \geq n-k+1$ and $s \equiv 0,1,2,4 \mod 8.$

Thus, if we choose $\ell$ to satisfy \eqref{eq:ast} and \eqref{eq:dag}, we obtain a map $\sk_{\leq \ell}MO\langle n \rangle \to Y$ with Adams filtration $\geq h(n,k) \defeq h(n) - h(n-k)$. Applying Lemma \ref{lem:filtrationnullhom} we see that this map is nullhomotopic whenever
\begin{equation}
    \ell \leq 2h(n,k)+2n-4. \tag{\ddag}\label{eq:ddag}
\end{equation}
Combining the three conditions \eqref{eq:ast}, \eqref{eq:dag} and \eqref{eq:ddag}, we need to maximize the integer $\ell$ satisfying 
\begin{equation*}
    \ell \leq \min\left(2^{h(n-k+1)}-1,  3n-k , 2h(n,k)+2n-4\right)
\end{equation*}
as a function of the integer $k$. 
In fact, we will first maximize $\ell$ subject to \eqref{eq:dag} and \eqref{eq:ddag}, and show that it automatically satisfies \eqref{eq:ast}, except for a finite number of cases that we do by hand.

To this end, we note the following elementary bounds $\floor{\frac{n}{2}}+2\geq h(n) \geq \floor{\frac{n}{2}}$ and $x\geq m\floor{\frac{x}{m}}\geq x-m$. Then
\begin{align*}
    2n-4 +2h(n,k) \geq 2n-4+2\left(\floor{\frac{n}{2}}-\floor{\frac{n-k}{2}}-2\right) \geq 2n+k-10
\end{align*}
Since the \eqref{eq:dag} function is decreasing in $k$ and the \eqref{eq:ddag} function is increasing in $k$ their minimum is maximized when equality is achieved, which yields 
\begin{align*}
    k= \frac{n}{2}+5.
\end{align*}
Since $k$ has to be an integer, we know that either the floor or ceiling of the above maximizes $\ell$. To figure out which it is, we plug the floor into the increasing function and the ceiling of $k$ into the decreasing function. It turns out that this gives the same value, namely 
\begin{align*}
    \ell = 2n+\floor{\frac{n}{2}}-5,
\end{align*}
It remains to see that this choice of $k$ also satisfies \eqref{eq:ast}, except for finitely many small values of $n$. This is done by finding upper bounds (resp. lower bounds) for the left (resp. right) hand side of \eqref{eq:ast} until we get rid of all floor and ceiling functions:
\begin{align*}
\ell \leq 2n + \frac{n}{2}-5 \stackrel{?}{\leq}  2^{\frac{3n}{5}-4} -1 \leq 2^{h(n-k+1)}-1
\end{align*}
The middle inequality only holds for $n\geq 16$, and so we determine $\ell$ for $n\leq 15$ by hand. The following table verifies that our choice of $\ell$ also works for these small values:
\begin{table*}[h]
    \centering
    \[
    \begin{array}{c| *{23}{c}}
n &  \, \, \, \, \, 0 & 1 & 2 & 3 & 4 & 5 & 6 & 7 & 8 & 9 & 10 & 11 & 12 & 13 & 14 & 15  \\
\hline 
\ell &  -1 & 0 & 1 & 3 & 3 & 7 & 7 & 7 & 7 & 15 & 19 & 21 & 25 & 27 & 29 & 31
    \end{array}\]
    \caption{Determining the maximum $\ell$ satisfying \eqref{eq:ast}, \eqref{eq:dag} and \eqref{eq:ddag} for small $n$.}
    \label{tab:smallnsplitting}
\end{table*}

\end{proof}

Putting everything together, we prove Theorem \ref{thm:mainthm}.

\begin{proof}[Proof of Theorem \ref{thm:mainthm}]
    Combine Corollary \ref{cor:getSplit}, Corollary \ref{cor:Y2complete} and Proposition \ref{prop:ellnullp2}.
\end{proof}

\section{Applications to $B\Diff(W^{2n}_g, D^{2n})$}
In this section we outline an application of Theorem \ref{thm:mainthm}: the calculation of $H_2(B\Diff(W^{2n}_g, D^{2n});\Z)$ for large $n$ and $g$.

\subsection{Low Degree Homology of $B\Diff(W^{2n}_g, D^{2n})$}  \label{sec:H2}
To translate from homotopy to homology we will use the fact that the first $k$-invariant of the infinite loop space $X=\Omega^\infty_0 MT\theta_n$ vanishes by the $n=2$ case of \cite[Corollary 2.5]{arlettaz_1988}. Then the second stage $\tau_{\leq 2} X$ in the Postnikov tower for $\Omega^\infty_0 MT\theta_n$ decomposes as
\begin{align*}
    \tau_{\leq 2} X \simeq K(\pi_1(X),1) \times  K(\pi_2(X),2).
\end{align*}
Applying relative Hurewicz to the $3$-connected map $X \to \tau_{\leq 2} X$ gives $H_2(X)\cong H_2(\tau_{\leq 2} X)$. The Künneth theorem then implies 
\begin{align*}
    H_2(X) \cong H_2(\pi_1(X)) \oplus \pi_2(X),
\end{align*}
using that homology of a $K(G,1)$ is group homology of $G$. The component $\pi_1(X)$ was computed in \cite[Theorem 1.3]{galatius2015abelian}: The authors construct a map
\begin{align*}
    \pi_1(\Omega^\infty_0 MT\theta_n) \to \pi_{2n+1}(MO\langle n+1 \rangle) \oplus H_1(\operatorname{Aut}(Q_{W_{g}^{2n}}))
\end{align*}
which is an isomorphism for $n\neq 2$ and $g \geq 5$. Here $Q_{W_{g}^{2n}}$ is a quadratic form whose underlying bilinear form is the intersection form on $H_n(W_{g}^{2n})$, and \cite{galatius2015abelian} proves that $H_1(\operatorname{Aut}(Q_{W_{g}^{2n}})) \cong \pi_{2n+1}(\Sigma^\infty SO/SO(2n)) \cong \pi_{2n+1} (\Sigma^\infty \R P^\infty _{2n})$ under these assumptions and that 
\[H_1(\operatorname{Aut}(Q_{W_{g}^{2n}})) \cong \begin{cases}
    (\Z/2)^2 & \text{if } n \text{ even,} \\
    0 & \text{if } n = 1,3,7, \\
    \Z/4 & \text{otherwise.}
\end{cases}\]
We note that this isomorphism can also be deduced from Theorem \ref{thm:mainthm}, albeit with a worse bound on $n$.

Thus we are reduced to understanding $\pi_2(\Omega^{\infty}_0MT\theta_n)\cong\pi_2(MT\theta_n)$. This group is determined by Theorem \ref{thm:mainthm} for $n\geq 16$, where we have a splitting $\pi_2(MT\theta_n) \cong \pi_{2n+2} MO \langle n+1 \rangle \oplus \pi_{2n+2} \Sigma^{\infty} \R P^\infty _{2n}$. The map 
$\alpha\colon B\D(W^{2n}_g,D^{2n}) \to \Omega^\infty_0 MT\theta_n$ is an isomorphism on $H_2$ whenever $g \geq 7$. Combining these two observations yields the following calculation.

\begin{corollary}\label{cor:H2MTtheta}
For $n\geq 16$ and $g\geq 7$ we have a non-canonical isomorphism    
\begin{align*}
   H_2(B\D(W^{2n}_g,D^{2n});\Z)  &\cong H_2(\pi_{2n+1} MO \langle n+1 \rangle\oplus \pi_{2n+1}(\Sigma^{\infty} \R P^\infty _{2n})) \\
   &\oplus \pi_{2n+2} MO \langle n+1 \rangle\oplus \pi_{2n+2}(\Sigma^\infty \R P^\infty _{2n}),
\end{align*}
\end{corollary}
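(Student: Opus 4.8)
The plan is to transport the computation from $B\D(W^{2n}_g, D^{2n})$ to the infinite loop space $X := \Omega^\infty_0 MT\theta_n$ and then read it off from the bottom of the Postnikov tower of $X$. First I would invoke parameterized Pontryagin--Thom theory: since $n \geq 16$ we have $2n \neq 4$, so \cite[Theorem 4.1]{galatius2019moduli} provides the map $\alpha \colon B\D(W^{2n}_g, D^{2n}) \to X$ which is an isomorphism on $H_*$ in degrees $* \leq \tfrac{g-3}{2}$. For $g \geq 7$ this includes $* = 2$, so it suffices to compute $H_2(X;\Z)$.

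Next I would analyse $\tau_{\leq 2} X$. By the $n = 2$ case of \cite[Corollary 2.5]{arlettaz_1988}, the first $k$-invariant of the infinite loop space $X$ vanishes, so $\tau_{\leq 2} X \simeq K(\pi_1 X, 1) \times K(\pi_2 X, 2)$. The map $X \to \tau_{\leq 2} X$ is $3$-connected, whence relative Hurewicz gives $H_2(X) \cong H_2(\tau_{\leq 2} X)$; since $K(\pi_2 X, 2)$ is simply connected the Künneth cross term $H_1(-) \otimes H_1(-)$ vanishes, as do the relevant $\mathrm{Tor}$ terms, so $H_2(X) \cong H_2(\pi_1 X) \oplus \pi_2 X$, where $H_2(\pi_1 X)$ denotes group homology and $\pi_i X \cong \pi_i MT\theta_n$ for $i \geq 1$.

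It remains to identify the two summands. For $\pi_1 MT\theta_n$ I would quote \cite[Theorem 1.3]{galatius2015abelian}, which for $n \neq 2$ and $g \geq 5$ identifies it with $\pi_{2n+1} MO\langle n+1 \rangle \oplus H_1(\mathrm{Aut}(Q_{W^{2n}_g}))$, together with the identification $H_1(\mathrm{Aut}(Q_{W^{2n}_g})) \cong \pi_{2n+1}(\Sigma^\infty \R P^\infty_{2n})$ proved there. For $\pi_2 MT\theta_n$ I would apply Theorem \ref{thm:mainthm}: the truncation bound $\floor{\tfrac{n}{2}} - 6 \geq 2$ holds precisely when $n \geq 16$, and in that range the splitting yields $\pi_2 MT\theta_n \cong \pi_{2n+2} MO\langle n+1 \rangle \oplus \pi_{2n+2}(\Sigma^\infty \R P^\infty_{2n})$. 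Substituting both identifications into $H_2(X) \cong H_2(\pi_1 X) \oplus \pi_2 X$ produces the asserted formula.

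The only genuinely substantive input is Theorem \ref{thm:mainthm}, needed to pin down $\pi_2$; everything else is assembly of known results, so I do not expect a real obstacle here. The points requiring a little care are purely bookkeeping: checking that $g \geq 7$ puts $* = 2$ in the Pontryagin--Thom range, that $n \geq 16$ is exactly what makes $\floor{\tfrac{n}{2}} - 6 \geq 2$, that $g \geq 5$ and $n \neq 2$ (both implied by the hypotheses) license the Galatius--Randal-Williams computation of $\pi_1$, and that the Künneth formula collapses as claimed because $K(\pi_2 X, 2)$ has trivial $H_1$.
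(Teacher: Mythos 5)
Your proposal is correct and follows essentially the same route as the paper: Pontryagin--Thom to reduce to $H_2(\Omega^\infty_0 MT\theta_n)$, Arlettaz's vanishing of the first $k$-invariant plus relative Hurewicz and K\"unneth to get $H_2 \cong H_2(\pi_1)\oplus \pi_2$, the Galatius--Randal-Williams identification of $\pi_1$, and Theorem \ref{thm:mainthm} (with $\lfloor n/2\rfloor - 6 \geq 2$, i.e.\ $n\geq 16$) for $\pi_2$. No gaps.
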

We can in many cases of interest determine the cobordism groups of highly connected manifolds appearing above. Starting with the exact sequence of spectra
\[
\Sph \lto MO\langle n+1 \rangle \lto MO\langle n+1 \rangle / \Sph 
\]
Stolz proves in \cite[\S 3]{stolz2007hochzusammenhangende} that $\pi_* MO\langle n+1 \rangle /\Sph \simeq \pi_*bo\langle n+1 \rangle \oplus \pi_*A[n+1]$ where $A[n+1]$ is a $2(n+1)$-connective spectrum satisfying $A[n+1] \simeq A[n+2]$ for $n+1 \equiv 3,5,6,7 \mod 8$ \cite[Satz 3.1 (ii), (iii)]{stolz2007hochzusammenhangende}. The splitting on homotopy groups is a consequence of \cite[Lemma 3.7]{stolz2007hochzusammenhangende}. Moreover, on the $bo \langle n+1 \rangle$ term, this identifies the boundary map $\partial : \pi_{*} MO\langle n+1 \rangle / \Sph  \to \pi_{*-1} \Sph$ with the $J$-homomorphism. Then the series of papers \cite{burklund2019boundaries,burklund2020highdimensional, sengeryiyu2022} establish that $\partial(\pi_{2n+2} A[n+1])\subset \im(J)_{2n+1}$ when $n+1 \neq 9,12$ and $\partial(\pi_{2n+3} A[n+1])\subset \im(J)_{2n+2}$ when $n+1 \neq 1,3,4,7,8,9$. (Though many cases of this were proven earlier, for example in the work of Stolz \cite{stolz2007hochzusammenhangende}.)

So outside of these cases we have the following short exact sequences
\[\begin{tikzcd}
	0 & {(\coker(J))_i} & {\pi_i(MO\langle n+1 \rangle)} & {\ker(\partial)_i} & 0,
	\arrow[from=1-1, to=1-2]
	\arrow[from=1-2, to=1-3]
	\arrow[from=1-3, to=1-4]
	\arrow[from=1-4, to=1-5]
\end{tikzcd}\]
\[\begin{tikzcd}
	0 & {(\ker(J))_i} & {\ker(\partial)_i} & {\pi_i(A[n+1 ])} & 0,
	\arrow[from=1-1, to=1-2]
	\arrow[from=1-2, to=1-3]
	\arrow[from=1-3, to=1-4]
	\arrow[from=1-4, to=1-5]
\end{tikzcd}\]
for $i=2n+1$ and $i=2n+2$. It follows from Serre's finiteness theorem and Adams's work on the $J$-homomorphism that $\ker(J)_i$ is $0$ if $i \not\equiv 0 \pmod{4} $ and $\Z$ otherwise.  

In the case where $i=2n+1$, we know that $\pi_{2n+1}(A[n+1]) \cong 0$ as it is $2(n+1)$-connective which implies that $\ker(J)_{2n+1} \cong \ker(\partial)_{2n+1} \cong 0$. Hence we recover the fact that $\pi_{2n+1}(MO\langle n+1\rangle) \cong \coker(J)_{2n+1}$ outside of the exceptional $n+1 =9,12$.

In the $i=2n+2$ case, the situation is  not as straightforward. We know by the above properties of $A[n+1]$ and \cite[Theorem A]{stolz2007hochzusammenhangende} that
\[
\pi_{2n+2}(A[n+1]) \cong \begin{cases}
    \Z & \text{if } n+1 \equiv 0,4 \pmod{8} \\
    \Z/2 & \text{if } n+1 \equiv 1,2 \pmod{8} \\
    0 & \text{if } n+1 \equiv 3,5,6,7 \pmod{8}. 
\end{cases}
\]
Hence we similarly conclude that 
\[
\pi_{2n+2}MO\langle n+1\rangle \cong
\begin{cases}
    \coker(J)_{2n+2}\oplus \Z & \text{if } n+1 \equiv 6 \pmod{8} \\
    \coker(J)_{2n+2} & \text{if } n+1 \equiv 3,5,7 \pmod{8} \\
    \coker(J)_{2n+2}\oplus \Z^2 & \text{if } n+1 \equiv 0,4 \pmod{8} \\
    E_1 & \text{if } n+1 \equiv 1 \pmod{8} \\
    E_2 & \text{if } n+1 \equiv 2 \pmod{8}.
\end{cases}
\]
Here the group $E_1$ is determined up to the following extension
\[\begin{tikzcd}
	0 & {(\coker(J))_i} & {E_1} & {\Z/2}& 0,
	\arrow[from=1-1, to=1-2]
	\arrow[from=1-2, to=1-3]
	\arrow[from=1-3, to=1-4]
	\arrow[from=1-4, to=1-5]
\end{tikzcd}\]
whereas $E_2$ is determined up to the two extensions
\[\begin{tikzcd}
	0 & {\Z} & {E_3} & {\Z/2} & 0,\\
 0 & {\coker(J)_{2n+2}} & {E_2} & {E_3} & 0.
	\arrow[from=1-1, to=1-2]
	\arrow[from=1-2, to=1-3]
	\arrow[from=1-3, to=1-4]
	\arrow[from=1-4, to=1-5]
	\arrow[from=2-1, to=2-2]
	\arrow[from=2-2, to=2-3]
	\arrow[from=2-3, to=2-4]
	\arrow[from=2-4, to=2-5]
\end{tikzcd}\]
The homotopy groups $\pi_{2n+2}(\Sigma^\infty \R P^\infty _{2n})$ are calculated in \cite[Table 1, $k=2n$, $p=2$]{hoo1965some}
\[
\pi_{2n+2}(\Sigma^\infty \R P^\infty_{2n})= \begin{cases} (\Z/2)^2&\text{if } 2n\equiv 0,4 \pmod{8} \\
0&\text{if } 2n\equiv 2,6 \pmod{8} \\    
\end{cases}
\]

Combining this with the above analysis and the fact that $H_2$ of any finite cyclic groups vanishes and Künneth, we provide a table with some example values of $H_2(B\Diff(W_{g}^{2n}, D^{2n});\Z)$ in terms of the above splitting.


\begin{table*}[h]
    \centering
    \[
    \begin{array}{c| c c c }
n & \quad \quad \quad \quad  16 \quad \quad \quad \, \, \, \,  & \, \,\,\quad \quad \quad \quad 17\quad \quad &  18 \\
\hline 
H_2 & E_1\oplus (\Z/2)^{15} & (\Z/2)^7\oplus \Z/2^2\oplus \Z/3\oplus \Z/5 & \, \Z/2\oplus \Z/2^2\oplus E_2 
    \end{array}\]
    \caption{Table of $H_2(B\Diff_\partial(W_{g}^{2n}, D^{2n});\Z)$}
    \label{tab:H2BDiff2}
\end{table*}

\bibliographystyle{alpha}
\bibliography{example}

\end{document}